\documentclass[11pt,a4paper]{article}
\usepackage{pict2e}
\usepackage{booktabs}
\usepackage{color}
\usepackage{epsfig}
\usepackage{epstopdf}
\usepackage{longtable}
\usepackage[T1]{fontenc}
\usepackage{amsmath}
\usepackage{cases}
\usepackage{geometry}
\usepackage{amsbsy,latexsym,amsfonts, epsfig, color, authblk, amssymb, graphics, bm}
\usepackage{epsf,slidesec,epic,eepic}
\usepackage{fancybox}
\usepackage{fancyhdr}
\usepackage{setspace}
\usepackage{nccmath}
\usepackage{diagbox}
\setlength{\abovecaptionskip}{2pt}
\setlength{\belowcaptionskip}{0pt}

\usepackage[colorlinks, citecolor=blue]{hyperref}
\newtheorem{theorem}{Theorem}[section]
\newtheorem{lemma}{Lemma}[section]
\newtheorem{algorithm}{Algorithm}[section]

\newtheorem{corollary}{Corollary}[section]
\newtheorem{assumption}{Assumption}[section]
\newtheorem{remark}{Remark}[section]

\numberwithin{equation}{section}

\newenvironment{proof}{{\noindent \bf Proof:}}{\hfill$\Box$\medskip}

\definecolor{lred}{rgb}{1,0.8,0.8}
\definecolor{lblue}{rgb}{0.8,0.8,1}
\definecolor{dred}{rgb}{0.6,0,0}
\definecolor{dblue}{rgb}{0,0,0.5}
\definecolor{dgreen}{rgb}{0,0.5,0.5}

 \title{A corrected semi-proximal ADMM for multi-block convex optimization and
 its application to DNN-SDPs}

\author{Li Shen\footnote{Department of Mathematics, South China University of Technology, Guangzhou, 510641, China (shen.li@mail.scut.edu.cn).}
 \ \ {\rm and}\ \ Shaohua Pan\footnote{Corresponding author. Department of Mathematics, South China University of Technology, Tianhe District of Guangzhou City, China
 (shhpan@scut.edu.cn).}}

 \date{December 10, 2014\\
 (Revised) February 10, 2015}

\begin{document}

 \maketitle

 \begin{abstract}
  In this paper we propose a corrected semi-proximal ADMM (alternating direction method of multipliers)
  for the general $p$-block $(p\!\ge 3)$ convex optimization problems with linear constraints,
  aiming to resolve the dilemma that almost all the existing modified versions of
  the directly extended ADMM, although with convergent guarantee, often perform substantially
  worse than the directly extended ADMM itself with no convergent guarantee.
  Specifically, in each iteration, we use the multi-block semi-proximal ADMM with step-size at least
  $1$ as the prediction step to generate a good prediction point, and then make correction
  as small as possible for the middle $(p\!-\!2)$ blocks of the prediction point. Among others,
  the step-size of the multi-block semi-proximal ADMM is adaptively determined by the infeasibility
  ratio made up by the current semi-proximal ADMM step for the one yielded by the last correction step.
  For the proposed corrected semi-proximal ADMM, we establish the global convergence results
  under a mild assumption, and apply it to the important class of doubly nonnegative semidefinite
  programming (DNN-SDP) problems with many linear equality and/or inequality constraints.
  Our extensive numerical tests show that the corrected semi-proximal ADMM is superior to
  the directly extended ADMM with step-size $\tau=1.618$ and the multi-block ADMM with
  Gaussian back substitution \cite{HTY12,HY13}. It requires the least number of iterations for
  $70\%$ test instances within the comparable computing time with that of the directly extended ADMM,
  and for about $40\%$ tested problems, its number of iterations is only $67\%$ that of
  the multi-block ADMM with Gaussian back substitution \cite{HTY12,HY13}.
  \end{abstract}

  \medskip
  \noindent
  {\bf Keywords:} Multi-block convex optimization, corrected semi-proximal ADMM, global convergence, DNN-SDPs\\

   \noindent
  {\bf AMS subject classification:} 90C06, 90C22, 90C25.

  \medskip

%----------------------------------------------------------------------------------------------------Introduction
  \section{Introduction}

  Let $\mathbb{Z}_1,\ldots,\mathbb{Z}_p$ and $\mathbb{X}$ be the real finite dimensional vector spaces
  which are equipped with an inner product $\langle \cdot,\cdot\rangle$ and its induced norm $\|\cdot\|$.
  We consider the following multi-block convex minimization problem with linear constraints in which
  the objective function is a sum of $p\ (p\ge 3)$ closed proper convex functions without overlapping variables:
  \begin{align}\label{prob}
   & \min_{z_i\in\mathbb{Z}_i}{\textstyle\sum_{i=1}^p}\theta_i(z_i)\nonumber\\
   &\ {\rm s.t.}\ {\textstyle\sum_{i=1}^p}\mathcal{A}_i^*z_i= c,
  \end{align}
  where $\theta_i\!:\mathbb{Z}_i\to(-\infty,+\infty]$ for $i=1,\ldots,p$ are closed proper convex
  functions, $\mathcal{A}_i^*\!:\mathbb{Z}_i\to\mathbb{X}$ for $i=1,\ldots,p$ are the adjoint
  operator of $\mathcal{A}_i\!:\mathbb{X}\to\mathbb{Z}_i$, and $c\in\mathbb{X}$
  is the given vector. Throughout this paper, we assume that problem (\ref{prob}) has an optimal solution.

  \medskip

  There are many important cases that take the form of (\ref{prob}). One compelling example is
  the so-called robust PCA (principle component analysis) with noisy and incomplete data considered in
  \cite{WGRPM09,TYuan11} and the low rank matrix required to be nonnegative, which can be modelled as
  (\ref{prob}) in which the objective function is the sum of the nuclear norm, the $\ell_1$-norm,
  and the indicator function over the nonnegative orthant cone. Another prominent example comes from
  the matrix completion with or without fixed basis coefficients \cite{Gross11,MPS14,Negahban11},
  for which the nuclear norm and the nuclear semi-norm penalized least squares convex relaxation problems
  exactly have a dual of the form (\ref{prob}). Another interesting example is the simultaneous
  minimization of the nuclear norm and $\ell_1$-norm of some structured matrix,
  which arises frequently from the structured low-rank and sparse representation for image classification
  and subspace clustering \cite{ZJD13,WXL13}. In Section 4 of this paper, we focus on the solution of model
  (\ref{prob}) from the doubly nonnegative semidefinite programming (DNN-SDP) problems with many linear
  equality and/or inequality constraints, which arise in convex relaxation for some difficult combinatorial
  optimization problems.

  \medskip

  The alternating direction method of multipliers (ADMM for short) was first proposed by Glowinski
  and Marrocco \cite{GM75} and Gabay and Mercier \cite{GM76} for the convex problem
  \begin{equation}\label{prob1}
    \min_{z_1\in\mathbb{Z}_1,z_2\in\mathbb{Z}_2}\Big\{\theta_1(z_1)+\theta_2(z_2)\ |\ \mathcal{A}_1^*z_1+\mathcal{A}_2^*z_2=c\Big\}.
  \end{equation}
  Let $l_{\sigma}\!: \mathbb{Z}_1\times\mathbb{Z}_2\times\mathbb{X}\to(-\infty,+\infty]$
  be the Lagrange function of model (\ref{prob1}) defined by
  \[
   l_{\sigma}(z_1,z_2;x):=\theta_1(z_1)+\theta_2(z_2)+\langle x, \mathcal{A}_1^*z_1+\mathcal{A}_2^*z_2-c \rangle
   +\frac{\sigma}{2}\big\|\mathcal{A}_1^*z_1+\mathcal{A}_2^*z_2-c\big\|^2,
  \]
  where $\sigma>0$ is the penalty parameter.
  For a chosen initial point $(z_1^0,z_2^0,x^0)\in {\rm dom}\,\theta_1\times{\rm dom}\,\theta_2\times\mathbb{X}$,
  the ADMM consists of the following iteration steps
  \begin{subnumcases}{}
  z_1^{k+1}\in\mathop{\arg\min}_{z_1\in\mathbb{Z}_1}\ l_{\sigma}(z_1,z_2^{k};x^{k}),\\
  z_2^{k+1}\in\mathop{\arg\min}_{z_2\in\mathbb{Z}_2}\ l_{\sigma}(z_1^{k+1},z_2;x^{k}),\\
  x^{k+1}=x^{k} +\tau\sigma\big(\mathcal{A}_1^*z_1^{k+1}+\mathcal{A}_2^*z_2^{k+1}-c\big),
  \label{multiplier1}
  \end{subnumcases}
  where $\tau\in(0,\frac{1+\sqrt{5}}{2})$ is a constant to control the step-size in
  \eqref{multiplier1}. The iterative scheme of ADMM actually embeds a Gaussian-Seidel
  decomposition into each iteration of the classical augmented Lagraigan method of
  Hestenes-Powell-Rockafellar \cite{Henstenes76,Powell69,Roc76}, so that the challenging task,
  i.e. the exact solution or the approximate solution with a high precision of the Lagrangian
  minimization problem, is relaxed to several easy ones.

  \medskip

  Motivated by the same philosophy, one naturally extends the above $2$-block ADMM to
  the multi-block convex minimization problem (\ref{prob}) directly.
  Let $L_{\sigma}\!: \mathbb{Z}_1\times\cdots\times\mathbb{Z}_p\times\mathbb{X}\to(-\infty,+\infty]$
  denote the augmented Lagrange function for model \eqref{prob}, defined by
  \[
   {\textstyle L_{\sigma}(z_1,\ldots,z_p;x):=\sum_{i=1}^p\theta_i(z_i)+\langle x, \sum_{i=1}^p\mathcal{A}_i^*z_i-c \rangle}
    +\frac{\sigma}{2}\big\|{\textstyle\sum_{i=1}^p\mathcal{A}_i^*z_i-c}\big\|^2
  \]
  where $\sigma\!>0$ is the penalty parameter. With a chosen initial point
  $(z_1^0,\ldots,z_p^0;x^0)\in {\rm dom}\,\theta_1\times\cdots\times{\rm dom}\,\theta_p\times\mathbb{X}$,
  the multi-block ADMM consists of the iteration steps:
  \begin{subnumcases}{}
  z_1^{k+1}\in\mathop{\arg\min}_{z_1\in\mathbb{Z}_1}\ L_{\sigma}(z_1,z_2^{k},\ldots,z_p^{k};x^{k}),\nonumber\\
  \qquad\qquad\vdots  \nonumber\\
  \label{ADMM-step1}
  z_i^{k+1}\in\mathop{\arg\min}_{z_i\in\mathbb{Z}_i}\ L_{\sigma}(z_1^{k+1},\ldots,z_{i-1}^{k+1},z_i,z_{i+1}^k,\ldots,z_p^{k};x^{k}),\\
  \qquad\qquad\vdots\nonumber\\
  z_p^{k+1}\in\mathop{\arg\min}_{z_p\in\mathbb{Z}_p}\ L_{\sigma}(z_1^{k+1},\ldots,z_{p-1}^{k+1},z_p;x^{k}),\nonumber\\
  x^{k+1} ={\textstyle x^{k}+\tau\sigma\big(\sum_{i=1}^p\mathcal{A}_i^*z_i^{k+1}- c\big)}.
  \label{ADMM-step2}
  \end{subnumcases}
  Many numerical results have illustrated that the directly extended ADMM with $\tau>1$ works
  very well in many cases (see, e.g., \cite{WGY10,DY2012,HTY12,WHML13,STY14}). In particular,
  Wen et al. \cite{WGY10} have utilized the $3$-block ADMM with $\tau=1.618$ to develop an efficient
  software for solving some SDP problems of large sizes. However, it was shown very recently by
  Chen et al. \cite{CHYY14} that in contrast to the $2$-block ADMM, the directly extended
  $3$-block ADMM may diverge even if $\tau$ is sufficiently small. This dashes any hope of
  using the directly extended multi-block ADMM without any modifications or any restrictions
  on $\theta_i$ or $\mathcal{A}_i$.

  \medskip

  In fact, before the announcement of \cite{CHYY14}, some researchers have made serious attempts
  in correcting the possible divergent multi-block ADMM (see, e.g., \cite{HYZC13,HTXY13,HTY12,HTY14,HY13,DLPY2014}).
  Among them, the multi-block ADMM with Gaussian back substitution \cite{HTY12} distinguishes itself
  for simplicity and generality. However, the recent numerical results reported in \cite{STY14} indicate
  that the multi-block ADMM with Gaussian back substitution (ADMMG for short) requires more iterations
  and computing time than the directly extended ADMM with $\tau=1.618$ for at least $75\%$ test problems,
  and for $61.5\%$ test problems it requires at least $1.5$ times as many iterations as the latter does.
  Now the dilemma is that almost all modified versions of the directly extended ADMM, although with
  convergent guarantee, often perform substantially worse than the directly extended ADMM
  with no convergent guarantee. This paper will make an active attempt in getting out of the dilemma.

  \medskip

  We observe that the ADMMG \cite{HTY12} in each iteration makes a correction on the iterate point
  yielded by the directly extended ADMM with the unit step-size to achieve the global convergence.
  As recognized by the authors in \cite{HTY12}, the introduction of the correction step often
  destroys the good numerical performance of the directly extended ADMM. It is well known that
  the ADMM with $\tau=1$ always requires more $20\%$ to $50\%$ iterations than the one with
  $\tau=1.618$. Thus, there is a big possibility that the iterate points yielded by the directly
  extended ADMM with $\tau=1$ are insufficient to overcome the negative influence of the correction step,
  which may interpret why the ADMMG even with little correction (i.e., the correction step-size
  $\alpha$ is as close to $1$ as possible) usually has worse performance than the directly extended
  ADMM with the unit step-size. Motivated by the crucial observation, we propose a corrected ADMM for
  problem \eqref{prob} by imposing suitable correction only on the middle $(p\!-\!2)$ blocks
  of the iterate point yielded by the multi-block semi-proximal ADMM with a large step-size,
  which is adaptively determined by the infeasibility ratio made up by the current semi-proximal
  ADMM step for the one yielded by the last correction step. Here, the multi-block semi-proximal ADMM,
  instead of the directly extended ADMM, is used to yield the prediction step just for the consideration
  that some subproblems involved in the directly extended ADMM are hard to solve
  but the proximal operators of the corresponding $\theta_i$'s are easy to obtain.

  \medskip

  In contrast to the ADMMG \cite{HTY12} and the linearized ADMMG \cite{HY13},
  our corrected semi-proximal ADMM do not make any correction for the $p$th block and
  the multiplier block of the prediction point. Although the correction step in \cite{HTY12,HY13}
  would not make any correction for the two blocks if the correction step-size takes $1$,
  the global convergence analysis there is not applicable to this extreme case. In addition,
  when the subproblems involved in the directly extended ADMM are easy to solve, one may set
  the semi-proximal operators to be zero, and then the corrected semi-proximal ADMM is using
  the directly extended ADMM to yield the prediction step. However, for this case,
  the linearized ADMMG \cite{HY13} still uses a linearized version of the directly extended ADMM
  to yield the prediction step except that all $\mathcal{A}_i\mathcal{A}_i^*$ reduce to the identity
  and the proximal parameters are all set to be the smallest one $\sigma\|\mathcal{A}_i\mathcal{A}_i^*\|$.
  For the advantage of a semi-proximal term over a strongly convex proximal term,
  the interested readers may refer to \cite{FPST13,STY14}.

  \medskip

  For the proposed corrected semi-proximal ADMM, we provide the global convergence analysis
  under a mild assumption for the operators $\mathcal{A}_i$'s, and apply it to the dual problems
  of five classes of doubly nonnegative SDP problems without linear inequality constraints and
  a class of doubly nonnegative SDP problems with many linear inequality constraints, which take
  the form of \eqref{prob} with $p=3$ and $p=4$, respectively. Our extensive numerical experiments
  for total {\bf 671} test problems demonstrate that the corrected semi-proximal ADMM is superior
  to the directly extended ADMM with $\tau=1.618$ and the ADMMG \cite{HTY12} and
  the linearized ADMMG \cite{HY13}, and it requires the least number of iterations for about
  $70\%$ test instances within the comparable computing time with that of the directly extended ADMM.
  In particular, for about $40\%$ test problems, the number of iterations of the corrected
  semi-proximal ADMM is at most $70\%$ that of the ADMMG \cite{HTY12,HY13}.

  \medskip

  In the rest of this paper, we say that a linear operator $\mathcal{T}\!:\mathbb{X}\to\mathbb{X}$
  is positive semidefinite (respectively, positive definite) if $\mathcal{T}$ is self-adjoint
  and $\langle u,\mathcal{T}\!u\rangle\ge 0$ for any $u\in\mathbb{X}$ (respectively,
  $\langle u,\mathcal{T}\!u\rangle>0$ for any $u\in\mathbb{X}\backslash\{0\}$),
  and write $\|u\|_\mathcal{T}=\sqrt{\langle u,\mathcal{T}\!u\rangle}$ for any $u\in\mathbb{X}$.

%------------------------------------------------------------------------------------------------Section 2
 \section{A corrected semi-proximal ADMM}\label{sec2}

  Choose the positive semidefinite linear operators $\mathcal{T}_i\!:\mathbb{Z}_i\to \mathbb{Z}_i$
  for $i=1,2,\ldots,p$ such that all $\mathcal{T}_i+\mathcal{A}_i\mathcal{A}_i^*$ are positive definite.
  Define the mapping $F\!:\mathbb{Z}_1\times\mathbb{Z}_2\times\cdots\times\mathbb{Z}_p\to\mathbb{X}$ by
  \begin{align}\label{hfun}
   F(z_1,z_2,\ldots,z_p):= \mathcal{A}_1^*z_1+\mathcal{A}_2^*z_2+\cdots+\mathcal{A}_p^*z_p-c.\quad
  \end{align}
 Next we describe the detailed iteration steps of the corrected semi-proximal ADMM.

   \begin{algorithm}\label{CSP-ADMM}({\bf Corrected semi-proximal ADMM})
   \begin{description}
    \item[(S.0)] Let $\sigma>0, \alpha\in(0,1)$ and $\overline{\tau}\in\!(0,1)$ be given.
                Choose a suitable small~$\varepsilon\in(0,{1}/{2})$, \hspace*{0.05cm}
                a starting point $\big(z_1^0,\ldots,z_p^0,x^0\big)\in{\rm dom}\,\theta_1\times
                \cdots\times{\rm dom}\,\theta_p\times\mathbb{X}$, and $\tau_0\in(1,2)$.
                Set \hspace*{0.05cm} $\widetilde{z}_i^0=z_i^0$ for $i=1,2,\ldots,p$.
                For $k=0,1,\ldots$, perform the $k$th iteration as follows.

   \item[(S.1)] (\textbf{Semi-proximal ADMM}) Compute the following minimization problems
                \begin{equation}\label{pred-step2}
                 \left\{\begin{array}{l}
                 z_1^{k+1}={\displaystyle\mathop{\arg\min}_{z_1\in\mathbb{Z}_1}}\
                 L_{\sigma}\big(z_1,\widetilde{z}_2^{k},\ldots,\widetilde{z}_{p}^{k};x^{k}\big)+\frac{\sigma}{2}\|z_1-\widetilde{z}_1^k\|^2_{\mathcal{T}_1},\\
                  \qquad\qquad\vdots\\
                  z_i^{k+1}={\displaystyle\mathop{\arg\min}_{z_i\in\mathbb{Z}_i}}\ L_{\sigma}\big(z_1^{k+1},\ldots,z_{i-1}^{k+1},z_i,
                  \widetilde{z}_{i+1}^k,\ldots,\widetilde{z}_{p}^{k};x^{k}\big)+\frac{\sigma}{2}\|z_i-\widetilde{z}_i^k\|^2_{\mathcal{T}_i},\\
                   \qquad\qquad\vdots\\
                  z_p^{k+1}={\displaystyle\mathop{\arg\min}_{z_p\in\mathbb{Z}_p}}\ L_{\sigma}\big(z_1^{k+1},\ldots,z_{p-1}^{k+1},z_p;x^{k}\big)
                  +\frac{\sigma}{2}\|z_p-\widetilde{z}_p^k\|^2_{\mathcal{T}_p},
                  \end{array}\right.
               \end{equation}
               \hspace*{0.05cm} and then update the Lagrange multiplier by the following formula
                \begin{equation}\label{multiplier-update}
                 x^{k+1}=x^{k}+\tau_{k}\sigma\big(\mathcal{A}_1^*z_1^{k+1}
                                +\mathcal{A}_2^*z_2^{k+1} +\cdots+\mathcal{A}_p^*z_p^{k+1}- c\big),
                \end{equation}
              \hspace*{0.05cm} where
                \begin{equation}\label{tauk-rule}
                  \tau_k:=\left\{\!\begin{array}{cl}\!
                                    \min(1\!+\!\delta_k,\tau_{k-1}) & {\rm if}\ 1\!+\!\delta_{k}>\overline{\tau}\\
                                     \overline{\tau}&{\rm otherwise}
                                     \end{array}\right.\ \ {\rm for}\ k\ge 1
                \end{equation}
              \hspace*{0.05cm}   with
               \begin{equation}\label{tauk}
                \delta_{k}=\!\frac{\|F(z_1^{k+1},\widetilde{z}_2^{k},\ldots,\widetilde{z}_{p}^{k})\|^2\!
                                             -\varepsilon\big(\|F(z_1^{k+1},\ldots,z_p^{k+1})\|^2\!+\!\|\mathcal{A}_p^*(z_p^{k+1}\!-\!z_p^k)\|^2\big)}
                                              {\|F(z_1^{k+1},\ldots,z_p^{k+1})\|^2}.
                \end{equation}

    \item[(S.2)] (\textbf{Correction step}) Set $\widetilde{z}_i^{k+1}\!=z_p^{k+1},\widetilde{z}_1^{k+1}\!= z_1^{k+1}$
                  and $\widetilde{z}_i^{k+1}$ for $i=p\!-\!1,\ldots,2$ as
                 \begin{equation}\label{corr-step2}
                 \widetilde{z}_{i}^{k+1}=\widetilde{z}_i^{k} + \alpha(z_i^{k+1}\!-\widetilde{z}_{i}^{k})
                   -{\textstyle\sum_{j=i+1}^{p}}\big(\mathcal{T}_i\!+\!\mathcal{A}_i\mathcal{A}_i^*\big)^{-1}
                  \mathcal{A}_i\mathcal{A}_{j}^*(\widetilde{z}_j^{k+1}\!-\widetilde{z}_j^{k}).
      %            \left\{\!\begin{array}{l}
%                  \widetilde{z}_p^{k+1} = z_p^{k+1},\\
%               %  \widetilde{z}_{p-1}^{k+1} = \widetilde{z}_{p-1}^{k}\! +\! \alpha(z_{p-1}^{k+1}\!-\!\widetilde{z}_{p-1}^{k}\!)
%%                 -\!\big(\mathcal{T}_{p-1}\!+\!\mathcal{A}_{p-1}\mathcal{A}_{p-1}^*\big)^{-1}
%%                 \mathcal{A}_{p-1}\mathcal{A}_{p}^*(\widetilde{z}_p^{k+1}\!-\!\widetilde{z}_p^{k})\\
%%                   \qquad\ \vdots\\
%                   \widetilde{z}_{i}^{k+1}=\widetilde{z}_i^{k} + \alpha(z_i^{k+1}\!-\widetilde{z}_{i}^{k})
%                   -\sum_{j=i+1}^{p}\big(\mathcal{T}_i+\mathcal{A}_i\mathcal{A}_i^*\big)^{-1}
%                  \mathcal{A}_i\mathcal{A}_{j}^*(\widetilde{z}_j^{k+1}\!-\widetilde{z}_j^{k}),\ i=p-2,\ldots,2,\\
%                  \widetilde{z}_1^{k+1} = z_1^{k+1}.
%                   \end{array}\right.
                 \end{equation}

  \item[(S.3)] Let $k\leftarrow k+1$, and then go to Step (S.1).
  \end{description}
 \end{algorithm}

  Since the positive semidefinite linear operators $\mathcal{T}_i$ for $i=1,2,\ldots,p$ are
  chosen such that all $\mathcal{T}_i+\mathcal{A}_i\mathcal{A}_i^*$ are positive definite,
  each subproblem in (S.1) is strongly convex, which implies that Algorithm \ref{CSP-ADMM}
  is well defined. An immediate choice for such $\mathcal{T}_i$ is
  $\varrho_i\mathcal{I}-\mathcal{A}_i\mathcal{A}_i^*$ with $\varrho_i\ge\|\mathcal{A}_i\mathcal{A}_i^*\|$.
  Notice that (S.1) of Algorithm \ref{CSP-ADMM} is using the multi-block semi-proximal ADMM
  to yield a prediction point, which can effectively deal with the case where the subproblems
  involved in \eqref{ADMM-step1} of the directly extended ADMM do not have closed form
  solutions but the proximal operators of $\theta_i$ are easy to obtain.
  The semi-proximal ADMM is clearly proposed just in the recent paper \cite{STY14},
  though the convergence of two-block semi-proximal ADMM was established in the earlier papers \cite{XW11,FPST13}.
  When all $\mathcal{A}_i\mathcal{A}_i^*$ are positive definite, one may choose all $\mathcal{T}_i$
  to be the zero operator and (S.1) of Algorithm \ref{CSP-ADMM} reduces to the directly extended ADMM
  with adaptive step-size.

%-------------------------------------------------------------------------------------------- Remark 2.2
 \begin{remark}\label{Remark2.2}
  In contrast to the ADMMG in \cite{HTY12} and the linearized ADMMG in \cite{HY13},
  Algorithm \ref{CSP-ADMM} introduces a step-size $\tau_k$ into the multiplier update \eqref{multiplier-update},
  which is adaptively determined by formula \eqref{tauk-rule}-\eqref{tauk}. The $\delta_{k}$ defined in
  \eqref{tauk} actually characterizes the infeasibility ratio made up by the $k$th semi-proximal
  ADMM step for the one yielded by the $(k\!-\!1)$th correction step. When the constant $\varepsilon$
  is chosen to be sufficiently small, the ratio $\delta_{k}$ is always positive, and
  consequently the step-size $\tau_k$ is at least $1$.

  \medskip

  Observe that the multiplier update in the ADMM is same as that of the augmented Lagrangian function method,
  while the latter is an approximate Newton direction when the penalty parameter is over a certain
  threshold (see \cite{SSZ08}). This implies that the multiplier block is good.
  In addition, the block $z_p^{k+1}$ from the semi-proximal ADMM is good since the negative
  influence of the last correction step on it is tiny after the first $(p\!-\!1)$
  minimization of the semi-proximal ADMM. So, unlike the ADMMG \cite{HTY12} and
  the linearized ADMMG \cite{HY13}, Algorithm \ref{CSP-ADMM} does not impose any correction
  on the $p$th block and the multiplier block of the prediction point. Of course, the ADMMG
  would not make any correction for the $p$th block and the multiplier block of the prediction point
  if the correction step-size takes $1$, but the convergence analysis there is not applicable to the extreme case.
 \end{remark}

%--------------------------------------------------------------------------------------------Remark 2.2
 \begin{remark}\label{Remark-3block}
  Now let us take a look at a special case with $p=3$, where all $\mathcal{T}_i=0$,
  $\theta_2$ is a linear function, to say $\theta_2(z_2)=\langle b,z_2\rangle$
  for some $b\in\mathbb{Z}_2$, and the operator $\mathcal{A}_2$ is surjective.
  Then, the correction step with the unit step-size reduces to
  \[
    \widetilde{z}_3^{k+1}\!=\!z_3^{k+1},\
    \widetilde{z}_2^{k+1}\!=z_2^{k+1}-(\mathcal{A}_2\mathcal{A}_2^*)^{-1}\mathcal{A}_2\mathcal{A}_3^*(\widetilde{z}_3^{k+1}\!-z_3^{k})
    \ \ {\rm and}\ \ \widetilde{z}_1^{k+1} = z_1^{k+1}.
  \]
  In this case, the iterate $(z_1^{k+1},z_2^{k+1},z_3^{k+1})$
  of Algorithm \ref{CSP-ADMM} is actually yielded by
  \begin{subnumcases}{}\label{equiv-min-prob1}
   z_2^{k+\frac{1}{2}}={\displaystyle\mathop{\arg\min}_{z_2\in\mathbb{Z}_2}}\ L_{\sigma}(z_1^{k},z_2, z_{3}^{k}, x^{k-1}),\\
   \label{equiv-min-prob2}
   z_1^{k+1}={\displaystyle\mathop{\arg\min}_{z_1\in\mathbb{Z}_1}}\ L_{\sigma}(z_1,z_2^{k+\frac{1}{2}}, z_{3}^{k}, x^{k}),\\
   \label{equiv-min-prob3}
   z_2^{k+1}={\displaystyle\mathop{\arg\min}_{z_2\in\mathbb{Z}_2}}\ L_{\sigma}(z_1^{k+1},z_2, z_{3}^{k}, x^{k}),\\
   \label{equiv-min-prob4}
   z_3^{k+1}={\displaystyle\mathop{\arg\min}_{z_3\in\mathbb{Z}_3}}\ L_{\sigma}(z_1^{k+1},z_2^{k+1}, z_{3}, x^{k}),
  \end{subnumcases}
  since it is easy to verify that $z_2^{k+\frac{1}{2}}=\widetilde{z}_2^k$. If, in addition,
  $x^{k-1}$ in \eqref{equiv-min-prob1} is replaced by $x^k$, then the iterate in
  \eqref{equiv-min-prob1}-\eqref{equiv-min-prob4} is equivalent to that of the following two-block ADMM
  \begin{equation}\label{equiv1-min-prob}
   \left\{\!\begin{array}{l}
   z_1^{k+1}={\displaystyle\mathop{\arg\min}_{z_1\in\mathbb{Z}_1}}\ L_{\sigma}\big(z_1,\phi(z_1,z_{3}^k, x^{k}),x^k\big),\\
   z_3^{k+1}={\displaystyle\mathop{\arg\min}_{z_3\in\mathbb{Z}_3}}\ L_{\sigma}\big(z_1^{k+1},\phi(z_1^{k+1},z_{3}^{k}, x^{k}), z_{3}, x^{k}\big)
   \end{array}\right.
  \end{equation}
  with $\phi(z_1,z_3,x):=-(\mathcal{A}_2\mathcal{A}_2^*)^{-1}(\mathcal{A}_1^*z_1\!+\!\mathcal{A}_3z_3-c)
  -\frac{1}{\sigma}(b_2\!+\!\mathcal{A}_2x)$, and \eqref{equiv1-min-prob} is equivalent to
  \begin{equation}\label{equiv2-min-prob}
   \left\{\!\begin{array}{l}
   (z_1^{k+1},z_2^{k+1})={\displaystyle\mathop{\arg\min}_{z_1\in\mathbb{Z}_1,z_2\in\mathbb{Z}_2}}\
    L_{\sigma}\big(z_1,z_2, z_{3}^{k}, x^{k}\big)+\frac{\sigma}{2}\|z_1-z_1^k\|^2_{\mathcal{T}},\\
   z_3^{k+1}={\displaystyle\mathop{\arg\min}_{z_3\in\mathbb{Z}_3}}\ L_{\sigma}\big(z_1^{k+1},z_2^{k+1}, z_{3}, x^{k}\big)
   \end{array}\right.
  \end{equation}
  with $\mathcal{T}=\mathcal{A}_1\mathcal{A}_2^*(\mathcal{A}_2\mathcal{A}_2^*)^{-1}\mathcal{A}_2\mathcal{A}_1^*$.
  The equivalence between the iterate schemes \eqref{equiv1-min-prob} and \eqref{equiv2-min-prob} is recently
  employed by Sun, Toh and Yang \cite{STY14} and Li, Toh and Sun \cite{LST14} to resolve a special case of (\ref{prob})
  in which $p=3$ and one of $\theta_i$'s is linear or quadratic.
  \end{remark}

  %--------------------------------------------------------------------------------------------Remark 2.3
   \begin{remark}\label{remark-sorder}
    It is immediate to see that the iterate $(\widetilde{z}_1^k,\ldots,\widetilde{z}_p^k,x^k)$ yielded by
    Algorithm \ref{CSP-ADMM} satisfies $\widetilde{z}_1^k\in {\rm dom}\,\theta_1$
    and $\widetilde{z}_p^k\in {\rm dom}\,\theta_p$. Thus, when the constraints
    $z_1\in{\rm dom}\,\theta_1$ and $z_p\in{\rm dom}\,\theta_p$ are hard to be satisfied,
    the best solution order of the subproblems in (S.1) of Algorithm \ref{CSP-ADMM}
    should be as follows: to solve $z_1$ (or $z_p$) first and solve $z_p$ (or $z_1$) last.
  \end{remark}

%------------------------------------------------------------------------------------------------Section 3
 \section{Convergence analysis of Algorithm \ref{CSP-ADMM}}

 We need the following constraint qualification where $\Omega$ denotes the feasible set of \eqref{prob}:
 \begin{assumption}\label{assumpA}
  There exists a point $(\widehat{z}_1,\ldots,\widehat{z}_p)\in{\rm ri}({\rm dom}\,\theta_1\times\cdots\times{\rm dom}\,\theta_p)\cap\Omega$.
 \end{assumption}

 Under Assumption \ref{assumpA}, from \cite[Corollary 28.2.2 \& 28.3.1]{Roc70} and \cite[Theorem 6.5 \& 23.8]{Roc70},
 it follows that $(z_1^*,\ldots,z_p^*)\in\mathbb{Z}_1\times\cdots\times\mathbb{Z}_p$
 is an optimal solution to problem \eqref{prob} if and only if there exists a Lagrange multiplier
 $x^*\in\mathbb{X}$ such that
 \begin{equation}\label{optimal-cond}
   -\!\mathcal{A}_ix^*\in\partial\theta_i(z_i^*)\ \ {\rm for}\ i=1,2,\ldots,p\ \ {\rm and}\ \
   \mathcal{A}_1^*z_1^*+\cdots+\mathcal{A}_p^*z_p^*-c=0,
 \end{equation}
 where $\partial\theta_i$ is the subdifferential mapping of $\theta_i$. Moreover, any $x^*\in\mathbb{X}$
 satisfying (\ref{optimal-cond}) is an optimal solution to the dual problem of \eqref{prob}.
 Notice that the subdifferential mapping of a closed proper convex function is maximal monotone by
 \cite[Theorem 12.17]{RW98}. Therefore, for each $i\in\{1,2,\ldots,p\}$, there exists a
 positive semidefinite linear operator $\Sigma_{i}:\mathbb{Z}_i\to\mathbb{Z}_i$ such that
 for all $z_i,\overline{z}_i\in{\rm dom}\,\theta_i$,
 $u_i\in\partial\theta_i(z_i)$ and $\overline{u}_i\in\partial\theta_i(\overline{z}_i)$,
 \begin{equation}\label{subdiff-theta}
   \theta_i(z_i)\ge\theta_i(\overline{z}_i)+\langle \overline{u}_i,z_i-\overline{z}_i\rangle
   +\frac{1}{2}\|z_i-\overline{z}_i\|_{\Sigma_i}^2\ {\rm and}\
   \langle u_i-\overline{u}_i,z_i-\overline{z}_i\rangle\ge \|z_i-\overline{z}_i\|_{\Sigma_i}^2.
 \end{equation}

  First, we establish a technical lemma to deal with the cross terms of the iterates.
 %------------------------------------------------------------------------------------------------------Lemma 1
  \begin{lemma}\label{lemma1}
  Let $\big\{(z_1^{k},\ldots,z_p^{k},x^{k})\big\}$ and $\big\{(\widetilde{z}_1^{k},\ldots,\widetilde{z}_p^{k})\big\}$
  be the sequences generated by Algorithm \ref{CSP-ADMM}. Then, under Assumption \ref{assumpA},
  for any optimal solution $(z_1^*,\ldots,z_p^*)\in\mathbb{Z}_1\times\cdots\times\mathbb{Z}_p$
  of (\ref{prob}) and the associated Lagrange multiplier $x^*\in\mathbb{X}$, we have
   \begin{align}\label{lemma1-ineq}
    &2\sum_{i=2}^p\!\Big\langle \widetilde{z}_i^{k}-z_i^*,\,{\textstyle\sum_{j=2}^i}\mathcal{A}_i\mathcal{A}_j^*\big(\widetilde{z}_j^k-z_j^{k+1}\big)
      +\mathcal{T}_i(\widetilde{z}_i^k-z_i^{k+1})\Big\rangle \nonumber\\
    &\quad + \frac{2}{\tau_{k}\sigma^2}\big\langle x^{k}-x^{k+1},x^{k}\!-\!x^*\big \rangle
       +2\big\langle z_1^k-z_1^*,\, \mathcal{T}_1(z_1^k-z_1^{k+1})\big\rangle\nonumber\\
     &\ge\Big\|\sum_{i=2}^p\mathcal{A}_i^*\big(z_i^{k+1}-\widetilde{z}_i^{k}\big)-\frac{1}{\tau_{k}\sigma}(x^{k+1}\!-\!x^{k})\Big\|^2
       +\frac{1}{(\tau_{k}\sigma)^2}\big\|x^{k+1}\!-\!x^{k}\big\|^2 \nonumber\\
     &\quad +\sum_{i=2}^p\big\|z_i^{k+1}\!-\!\widetilde{z}^k\big\|_{\mathcal{A}_i\mathcal{A}_i^*+2\mathcal{T}_i}^2
     +2\big\|z_1^{k+1}\!-\!z_1^k\big\|_{\mathcal{T}_1}^2
           +\frac{2}{\sigma}\sum_{i=1}^p\big\|z_i^{k+1}\!-\!z_i^*\big\|_{\Sigma_i}^2\quad \forall k.
   \end{align}
 % \begin{align}\label{lemma1-ineq}
 %  &\sum_{2\le j\le i\le p}\!\big\langle z_i^{k+1}-z_i^*,\,\mathcal{A}_i^*\mathcal{A}_j^*\big(\widetilde{z}_j^k-z_j^{k+1}\big)\big\rangle
 %    + \frac{1}{\tau_{k}\sigma^2}\big\langle x^{k}-x^{k+1},x^{k+1}\!-\!x^*\big \rangle\nonumber\\
 %  &\ge \frac{1}{\tau_{k}\sigma}\sum_{i=2}^p\big\langle x^{k+1}-x^k,\,\mathcal{A}_i^*\big(\widetilde{z}_i^{k}-\tau_{k}z_i^{k+1}\big)\big\rangle
 %     +\frac{\tau_{k}\!-\!1}{\tau_{k}\sigma}\big\langle x^{k}\!-\!x^{k+1},\,\mathcal{A}_1^*z_1^{k+1}\!-\!c\big\rangle\nonumber\\
 %  &\quad +\sum_{i=1}^p\big\langle z_i^*-z_i^{k+1},\, \mathcal{T}_i(\widetilde{z}_i^k-z_i^{k+1})\big\rangle
 %  +\frac{1}{\sigma}\sum_{i=1}^p\big\|z_i^{k+1}-z_i^*\big\|_{\Sigma_i}^2\quad{\rm for\ all}\ k.
 %\end{align}
 \end{lemma}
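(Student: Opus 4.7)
The plan is to derive \eqref{lemma1-ineq} by combining the first-order optimality conditions of the $p$ subproblems in \eqref{pred-step2} with the monotonicity estimate \eqref{subdiff-theta} applied at each pair $(z_i^{k+1},z_i^*)$. The $i$th subproblem produces a subgradient $u_i^{k+1} = -\mathcal{A}_i x^k - \sigma\mathcal{A}_i\widehat{r}_i^{k+1} - \sigma\mathcal{T}_i(z_i^{k+1}-\widetilde{z}_i^k) \in \partial\theta_i(z_i^{k+1})$, where $\widehat{r}_i^{k+1} := \sum_{j\le i}\mathcal{A}_j^*z_j^{k+1} + \sum_{j>i}\mathcal{A}_j^*\widetilde{z}_j^k - c$ is the partial residual seen by block $i$ (with the convention $\widetilde{z}_1^k := z_1^k$ forced by the correction step). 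Pairing with $-\mathcal{A}_ix^* \in \partial\theta_i(z_i^*)$ via \eqref{subdiff-theta}, scaling by $2/\sigma$, and summing over $i=1,\ldots,p$ gives, after using $\sum_i\mathcal{A}_i^*z_i^* = c$ and the multiplier update \eqref{multiplier-update}, the master inequality
\[
 \frac{2}{\tau_k\sigma^2}\langle x^k\!-\!x^{k+1}, x^k\!-\!x^*\rangle - 2\sum_{i=1}^p\langle \mathcal{A}_i^*(z_i^{k+1}\!-\!z_i^*),\widehat{r}_i^{k+1}\rangle - 2\sum_{i=1}^p\langle z_i^{k+1}\!-\!z_i^*,\mathcal{T}_i(z_i^{k+1}\!-\!\widetilde{z}_i^k)\rangle \ge \frac{2}{\sigma}\sum_{i=1}^p\|z_i^{k+1}\!-\!z_i^*\|_{\Sigma_i}^2.
\]

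The next step is to reshape the two left-hand sums to match the cross terms appearing on the LHS of \eqref{lemma1-ineq}. For each $\mathcal{T}_i$ inner product I would split $z_i^{k+1}-z_i^* = (z_i^{k+1}-\widetilde{z}_i^k)+(\widetilde{z}_i^k-z_i^*)$: the square part produces $-2\sum_{i=1}^p\|z_i^{k+1}-\widetilde{z}_i^k\|_{\mathcal{T}_i}^2$, which moves to the right-hand side and delivers both $2\|z_1^{k+1}-z_1^k\|_{\mathcal{T}_1}^2$ and the $\mathcal{T}_i$-part of $\|\cdot\|_{\mathcal{A}_i\mathcal{A}_i^* + 2\mathcal{T}_i}^2$ in \eqref{lemma1-ineq}, while the cross part yields exactly $2\langle z_1^k-z_1^*,\mathcal{T}_1(z_1^k-z_1^{k+1})\rangle + 2\sum_{i=2}^p\langle \widetilde{z}_i^k-z_i^*,\mathcal{T}_i(\widetilde{z}_i^k-z_i^{k+1})\rangle$, the $\mathcal{T}_i$-contribution on the LHS of \eqref{lemma1-ineq}.

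The main obstacle, and the technical heart of the proof, is the purely algebraic identity
\[
 2\sum_{i=1}^p\langle \mathcal{A}_i^*(z_i^{k+1}\!-\!z_i^*),\widehat{r}_i^{k+1}\rangle + 2\sum_{i=2}^p\Big\langle \widetilde{z}_i^k\!-\!z_i^*,\,{\textstyle\sum_{j=2}^i}\mathcal{A}_i\mathcal{A}_j^*(\widetilde{z}_j^k\!-\!z_j^{k+1})\Big\rangle = \|\widehat{r}_1^{k+1}\|^2 + \|r^{k+1}\|^2 + \sum_{i=2}^p\|\mathcal{A}_i^*(z_i^{k+1}\!-\!\widetilde{z}_i^k)\|^2,
\]
where $r^{k+1}\!:=\!\sum_i\mathcal{A}_i^*z_i^{k+1} - c$. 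I would prove this by introducing $\xi_j\!:=\!z_j^{k+1}-z_j^*$ and $\mu_j\!:=\!\widetilde{z}_j^k-z_j^*$ (so $z_j^{k+1}-\widetilde{z}_j^k = \xi_j-\mu_j$ and $\widehat{r}_i^{k+1} = \sum_{j\le i}\mathcal{A}_j^*\xi_j + \sum_{j>i}\mathcal{A}_j^*\mu_j$) and expanding every bilinear pairing on both sides. The diagonal reorganisation $2\sum_{j\le i}\langle \mathcal{A}_i^*\xi_i,\mathcal{A}_j^*\xi_j\rangle = \|r^{k+1}\|^2 + \sum_i\|\mathcal{A}_i^*\xi_i\|^2$, together with the completion of the square $\|\widehat{r}_1^{k+1}\|^2 = \|\mathcal{A}_1^*\xi_1 + \sum_{i\ge 2}\mathcal{A}_i^*\mu_i\|^2$, supplies the $\|r^{k+1}\|^2$ and $\|\widehat{r}_1^{k+1}\|^2$ contributions; the remaining off-diagonal pairings $\langle \mathcal{A}_i^*\xi_i,\mathcal{A}_j^*\mu_j\rangle$ with $i,j\ge 2$ cancel pairwise after swapping $i\leftrightarrow j$ in the two triangular blocks, and the diagonal terms $-2\langle \mathcal{A}_i^*\xi_i,\mathcal{A}_i^*\mu_i\rangle$ for $i\ge 2$ assemble the missing $\|\mathcal{A}_i^*(z_i^{k+1}-\widetilde{z}_i^k)\|^2$. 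Once this identity is established, \eqref{multiplier-update} converts $\|r^{k+1}\|^2$ into $(\tau_k\sigma)^{-2}\|x^{k+1}-x^k\|^2$, and the rewriting $\|\widehat{r}_1^{k+1}\|^2 = \|\sum_{i=2}^p\mathcal{A}_i^*(z_i^{k+1}-\widetilde{z}_i^k) - (\tau_k\sigma)^{-1}(x^{k+1}-x^k)\|^2$ (obtained by adding and subtracting $r^{k+1}$ inside the norm) matches every term on the RHS of \eqref{lemma1-ineq}, the leftover $\frac{2}{\sigma}\sum_i\|z_i^{k+1}-z_i^*\|_{\Sigma_i}^2$ being exactly the subdifferential slack from \eqref{subdiff-theta}.
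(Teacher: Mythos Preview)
Your proof is correct and uses the same ingredients as the paper's: the optimality conditions \eqref{opt-cond} of the subproblems, the monotonicity inequality \eqref{subdiff-theta}, feasibility of the optimal point, and the multiplier update \eqref{multiplier-update}. I verified your central algebraic identity by writing $a_i=\mathcal{A}_i^*\xi_i$, $b_i=\mathcal{A}_i^*\mu_i$ and expanding; the cross terms indeed cancel exactly as you describe, leaving $\|r^{k+1}\|^2+\|\widehat r_1^{k+1}\|^2+\sum_{i\ge 2}\|a_i-b_i\|^2$.

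The organizational route differs from the paper's. The paper substitutes the multiplier update \emph{early} (immediately after applying monotonicity), then adds a tailored cross term to each of the $p$ inequalities before summing, and reaches \eqref{lemma1-ineq} through a chain of intermediate relations \eqref{temp-ineq31}--\eqref{temp-equa32}. You instead keep everything in residual form, derive a single master inequality, and isolate all of the $\mathcal{A}$-term algebra into one exact identity that is purely combinatorial in the vectors $\mathcal{A}_i^*\xi_i,\mathcal{A}_i^*\mu_i$; the multiplier update enters only at the final rewriting step. Your packaging is more transparent---the identity makes it clear that the inequality is sharp up to the $\Sigma_i$-slack from \eqref{subdiff-theta}---while the paper's incremental derivation avoids having to state and verify a standalone identity. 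Mathematically the two are equivalent reorganizations of the same computation.
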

 \begin{proof}
  From the definition of $z_i^{k+1}$ in equation \eqref{pred-step2}, it follows that for $i=1,2,\ldots,p$,
  \begin{equation}\label{opt-cond}
  -\!\mathcal{A}_i\!\left[x^k+\sigma\big({\textstyle \sum_{j=1}^i\mathcal{A}_j^*z_j^{k+1}}
  +{\textstyle\sum_{j=i+1}^p}\mathcal{A}_j^*\widetilde{z}_j^{k} -c\big)\right]
  -\sigma\mathcal{T}_i\big(z_i^{k+1}\!-\!\widetilde{z}_i^k\big)\in\partial\theta_i(z_i^{k+1}).
 \end{equation}
 Since $-\mathcal{A}_ix^*\in \partial \theta_i(z_i^*)$ for $i=1,2,\ldots,p$,
 from equation \eqref{subdiff-theta} we have that
 \begin{align*}
  &\Big\langle z_i^*-z_i^{k+1},\, \mathcal{A}_i\Big[x^k-x^*+\sigma\big({\textstyle \sum_{j=1}^i\mathcal{A}_j^*z_j^{k+1}
  +\!\sum_{j=i+1}^p\!\mathcal{A}_j^*\widetilde{z}_j^{k}} -c\big)\Big]\Big\rangle \nonumber\\
  &\ \ +\sigma\big\langle z_i^*-z_i^{k+1},\, \mathcal{T}_i(z_i^{k+1}-\widetilde{z}_i^k)\big\rangle
  \ge \big\|z_i^{k+1}-z_i^*\big\|_{\Sigma_i}^2,\quad i=1,2,\ldots,p.
 \end{align*}
 Substituting \eqref{multiplier-update} into the last $p$ inequalities successively yields that
 \begin{align}\label{temp-ineq31}
  &\big\langle \mathcal{A}_i^*(z_i^*-z_i^{k+1}),\, x^{k+1}-x^*\big\rangle
  + \sigma\big\langle z_i^*-z_i^{k+1},\, \mathcal{T}_i(z_i^{k+1}-\widetilde{z}_i^k)\big\rangle-\big\|z_i^{k+1}-z_i^*\big\|_{\Sigma_i}^2\nonumber\\
  &\ge\sigma\big\langle \mathcal{A}_i^*(z_i^*-z_i^{k+1}),(\tau_{k}\!-\!1)\big({\textstyle\sum_{j=1}^i\mathcal{A}_j^*z_j^{k+1}-c\big)\!
       +\sum_{j=i+1}^p\mathcal{A}_j^*\big(\tau_kz_j^{k+1}-\widetilde{z}_j^k\big)}\big\rangle
 \end{align}
 for $i=1,2,\ldots,p$. Now adding the term $\sigma\big\langle \mathcal{A}_i^*(z_i^*-z_i^{k+1}),\sum_{j=2}^p\mathcal{A}_j^*\big(\widetilde{z}_j^k-\tau_kz_j^{k+1}\big)\big\rangle$
 to the both sides of the $i$th inequality in \eqref{temp-ineq31} for $i=1,2,\ldots,p$ yields that
 \begin{align}\label{temp-equa31}
  &\big\langle \mathcal{A}_i^*(z_i^*-z_i^{k+1}),\,x^{k+1}-x^*+\sigma{\textstyle\sum_{j=2}^p\mathcal{A}_j^*\big(\widetilde{z}_j^k-\tau_kz_j^{k+1}\big)}\big\rangle
   \nonumber\\
  &\ge\sigma\big\langle \mathcal{A}_i^*(z_i^*-z_i^{k+1}),\,(\tau_{k}\!-\!1)\big(\mathcal{A}_1^*z_1^{k+1}-c\big)
  +{\textstyle\sum_{j=2}^i\mathcal{A}_j^*\big(\widetilde{z}_j^k-z_j^{k+1}\big)}\!\big\rangle\nonumber\\
   &\quad +\sigma\big\langle z_i^*-z_i^{k+1},\, \mathcal{T}_i(\widetilde{z}_i^k-z_i^{k+1})\big\rangle
   +\big\|z_i^{k+1}-z_i^*\big\|_{\Sigma_i}^2,\quad i=1,2,\ldots,p.
  \end{align}
 Adding the $p$ inequalities in \eqref{temp-equa31} together, we have that the left hand side is equal to
 \begin{align*}
   &\big\langle{\textstyle\sum_{i=1}^p\mathcal{A}_i^*(z_i^*-z_i^{k+1})},\,x^{k+1}-x^*
  +\sigma{\textstyle\sum_{j=2}^p\mathcal{A}_j^*\big(\widetilde{z}_j^k-\tau_kz_j^{k+1}\big)}\big\rangle\nonumber\\
  &=\big\langle c-{\textstyle\sum_{i=1}^p\mathcal{A}_i^*z_i^{k+1}},\ x^{k+1}-x^*
     +\sigma{\textstyle\sum_{j=2}^p\mathcal{A}_j^*\big(\widetilde{z}_j^k-\tau_kz_j^{k+1}\big)}\big\rangle,
 \end{align*}
  where the equality is due to $\sum_{i=1}^p\mathcal{A}_i^*z_i^*=c$;
  while the right hand side is equal to
 \begin{align*}
 &\Big\langle c-\!\sum_{i=1}^p\mathcal{A}_i^*z_i^{k+1},\,\sigma(\tau_{k}\!-\!1)(\mathcal{A}_1^*z_1^{k+1}\!-\!c)\Big\rangle
  +\sigma\!\sum_{i=2}^p\!\Big\langle z_i^*-\!z_i^{k+1},\,{\textstyle\sum_{j=2}^i}\mathcal{A}_i^*\mathcal{A}_j^*\big(\widetilde{z}_j^k-z_j^{k+1}\big)\Big\rangle\nonumber\\
   &\quad +\sigma\sum_{i=1}^p\big\langle z_i^*-z_i^{k+1},\, \mathcal{T}_i(\widetilde{z}_i^k-z_i^{k+1})\big\rangle
          +\sum_{i=1}^p\big\|z_i^{k+1}-z_i^*\big\|_{\Sigma_i}^2.
 \end{align*}
 By combining the last two equations with inequality \eqref{temp-equa31}, it then follow that
 \begin{align*}
  &\Big\langle c-\sum_{i=1}^p\mathcal{A}_i^*z_i^{k+1},\,x^{k+1}-x^*\!-\!\sigma(\tau_{k}\!-\!1)(\mathcal{A}_1^*z_1^{k+1}\!-\!c)
   \!+\!\sigma\sum_{j=2}^p\mathcal{A}_j^*\big(\widetilde{z}_j^{k}-\tau_{k}z_j^{k+1}\big)\Big\rangle\nonumber\\
  &\ge\sigma\!\sum_{i=2}^p\!\Big\langle z_i^*-z_i^{k+1},\,{\textstyle\sum_{j=2}^i}\mathcal{A}_i^*\mathcal{A}_j^*\big(\widetilde{z}_j^k-z_j^{k+1}\big)\Big\rangle\nonumber\\
  &\quad\ +\sigma\sum_{i=1}^p\big\langle z_i^*-z_i^{k+1},\, \mathcal{T}_i(\widetilde{z}_i^k-z_i^{k+1})\big\rangle
     +\sum_{i=1}^p\big\|z_i^{k+1}-z_i^*\big\|_{\Sigma_i}^2,
 \end{align*}
 which, by noting that
 \(
  c-\sum_{i=1}^p\mathcal{A}_i^*z_i^{k+1}=\frac{1}{\tau_{k}\sigma}(x^k-x^{k+1}),
 \)
 can be equivalently written as
 \begin{align}\label{temp-ineq1}
 &\frac{1}{\tau_{k}\sigma}\big\langle x^{k}-x^{k+1},x^{k+1}\!-\!x^*\big \rangle+
  \sigma\!\sum_{i=2}^p\!\big\langle z_i^{k+1}-z_i^*,\,{\textstyle\sum_{j=2}^i}\mathcal{A}_i^*\mathcal{A}_j^*\big(\widetilde{z}_j^k-z_j^{k+1}\big)\big\rangle\nonumber\\
 &\ge \frac{1}{\tau_{k}}\Big\langle x^{k+1}-x^k,\,\sum_{i=2}^p\mathcal{A}_i^*\big(\widetilde{z}_i^{k}-\tau_{k}z_i^{k+1}\big)\Big\rangle
      +\frac{\tau_{k}-1}{\tau_{k}}\big\langle x^{k}-x^{k+1},\,\mathcal{A}_1^*z_1^{k+1}\!-\!c\big\rangle\nonumber\\
 &\quad +\sigma\sum_{i=1}^p\big\langle z_i^*-z_i^{k+1},\, \mathcal{T}_i(\widetilde{z}_i^k-z_i^{k+1})\big\rangle
     +\sum_{i=1}^p\big\|z_i^{k+1}-z_i^*\big\|_{\Sigma_i}^2.
 \end{align}
 Next we make a simplification for \eqref{temp-ineq1}. The left hand side of \eqref{temp-ineq1} can be rewritten as
 \begin{align}\label{temp-equa33}
  &\sigma\!\sum_{i=2}^p\!\big\langle \widetilde{z}_i^{k}-z_i^*,\,{\textstyle\sum_{j=2}^i}\mathcal{A}_i\mathcal{A}_j^*\big(\widetilde{z}_j^k-z_j^{k+1}\big)\big\rangle
  + \frac{1}{\tau_{k}\sigma}\big\langle x^{k}-x^{k+1},x^{k}\!-\!x^*\big \rangle \nonumber\\
  &\ +\sigma\!\sum_{i=2}^p\!\big\langle z_i^{k+1}-\widetilde{z}_i^{k},\,{\textstyle\sum_{j=2}^i}\mathcal{A}_i\mathcal{A}_j^*\big(\widetilde{z}_j^k-z_j^{k+1}\big)\big\rangle
  -\frac{1}{\tau_{k}\sigma}\big\|x^{k+1}-x^{k}\big\|^2,
  \end{align}
 while the right hand side of \eqref{temp-ineq1} can be rearranged as follows
 \begin{align*}
 &\frac{1}{\tau_k}\left[\sum_{i=2}^p\big\langle x^{k+1}\!-\!x^{k},\,\mathcal{A}_i^*\big(\widetilde{z}_i^{k}\!-\!z_i^{k+1}\big)\big\rangle
    +(\tau_{k}\!-\!1)\big\langle x^{k}-x^{k+1},\sum_{i=1}^p\mathcal{A}_i^*z_i^{k+1}-c\big\rangle\right]\nonumber\\
 &\quad +\sigma\sum_{i=1}^p\big\langle z_i^*-z_i^{k+1},\, \mathcal{T}_i(\widetilde{z}_i^k-z_i^{k+1})\big\rangle
   +\sum_{i=1}^p\big\|z_i^{k+1}-z_i^*\big\|_{\Sigma_i}^2,\nonumber
 \end{align*}
 which, by using the equality
 \(
   \sum_{i=1}^p\mathcal{A}_i^*z_i^{k+1}\!-c=\frac{1}{\tau_{k}\sigma}(x^{k+1}-x^{k}),
  \)
  is equivalent to
 \begin{align}\label{temp-equa32}
 &\frac{1}{\tau_k}\sum_{i=2}^p\big\langle x^{k+1}\!-\!x^{k},\,\mathcal{A}_i^*\big(\widetilde{z}_i^{k}\!-\!z_i^{k+1}\big)\big\rangle
    -\frac{\tau_{k}-1}{\tau_{k}^2\sigma}\big\|x^{k+1}\!-\!x^{k}\big\|^2+\sigma\sum_{i=1}^p\big\|z_i^{k+1}\!-\!\widetilde{z}_i^k\big\|_{\mathcal{T}_i}^2\nonumber\\
 &\ \  +\sigma\sum_{i=1}^p\big\langle z_i^*\!-\!\widetilde{z}_i^k,\, \mathcal{T}_i(\widetilde{z}_i^k-z_i^{k+1})\big\rangle
       +\sum_{i=1}^p\big\|z_i^{k+1}\!-z_i^*\big\|_{\Sigma_i}^2.
\end{align}
 Now, combining equations \eqref{temp-equa33}-\eqref{temp-equa32} with inequality \eqref{temp-ineq1},
 we obtain that
 \begin{align*}
 &\sum_{i=2}^p\!\big\langle \widetilde{z}_i^{k}-z_i^*,\,{\textstyle\sum_{j=2}^i}\mathcal{A}_i\mathcal{A}_j^*\big(\widetilde{z}_j^k\!-\!z_j^{k+1}\big)\big\rangle
  + \frac{1}{\tau_{k}\sigma^2}\big\langle x^{k}-x^{k+1},x^{k}\!-\!x^*\big \rangle \nonumber\\
 &\ge \sum_{i=2}^p\!\Big\langle \widetilde{z}_i^k-z_i^{k+1},
   \,{\textstyle\sum_{j=2}^i}\mathcal{A}_i\mathcal{A}_j^*\big(\widetilde{z}_j^k-z_j^{k+1}\big)\Big\rangle
  +\frac{1}{(\tau_{k}\sigma)^2}\big\|x^{k+1}\!-\!x^{k}\big\|^2\\
 &\quad +\frac{1}{\tau_k\sigma}\sum_{i=2}^p\big\langle x^{k+1}-x^{k},\,\mathcal{A}_i^*\big(\widetilde{z}_i^{k}-z_i^{k+1}\big)\big\rangle
 +\frac{1}{\sigma}\sum_{i=1}^p\big\|z_i^{k+1}-z_i^*\big\|_{\Sigma_i}^2\nonumber\\
  &\quad +\sum_{i=1}^p\big\|z_i^{k+1}-\widetilde{z}_i^k\big\|_{\mathcal{T}_i}^2
       +\sum_{i=1}^p\big\langle z_i^*\!-\!\widetilde{z}_i^k,\, \mathcal{T}_i(\widetilde{z}_i^k-z_i^{k+1})\big\rangle\\
 &=\frac{1}{2}\sum_{i=2}^p\big\|\mathcal{A}_i^*({z}_i^{k+1}-\widetilde{z}_i^k)\big\|^2
   +\frac{1}{2}\Big\|\sum_{i=2}^p\mathcal{A}_i^*\big(z_i^{k+1}-\widetilde{z}_i^{k}\big)-\frac{1}{\tau_{k}\sigma}(x^{k+1}\!-\!x^{k})\Big\|^2\\
 &\quad +\frac{1}{2(\tau_{k}\sigma)^2}\big\|x^{k+1}\!-\!x^{k}\big\|^2+\frac{1}{\sigma}\sum_{i=1}^p\big\|z_i^{k+1}\!-z_i^*\big\|_{\Sigma_i}^2\\
 &\quad +\sum_{i=1}^p\big\|z_i^{k+1}-\widetilde{z}_i^k\big\|_{\mathcal{T}_i}^2
       +\sum_{i=1}^p\big\langle z_i^*\!-\!\widetilde{z}_i^k,\, \mathcal{T}_i(\widetilde{z}_i^k-z_i^{k+1})\big\rangle.
 \end{align*}
  This along with $\widetilde{z}_1^k=z_1^k$ implies the desired inequality. The proof is completed.
 \end{proof}

 \medskip

 To establish the convergence results of Algorithm \ref{CSP-ADMM}, we introduce the notations
 \[
    w^*=(z_2^*,\ldots,z_p^*),\ \
    \mathcal{E}_i=\mathcal{A}_i\mathcal{A}_i^*+\mathcal{T}_i\ \ {\rm and}\ \
    \mathcal{B}_i=\mathcal{A}_i\mathcal{E}_i^{-1}\mathcal{A}_i^*\ \ {\rm for}\ \ i=2,3,\ldots,p.
 \]
 For each $k$, let $w^k=(z_2^k,z_3^k,\ldots,z_p^k)$ and $\widetilde{w}^k=(\widetilde{z}_2^k,\widetilde{z}_3^k,\ldots,\widetilde{z}_p^k)$.
 Define the linear operators $\mathcal{M}\!:\mathbb{Z}_2\times\cdots\times\mathbb{Z}_p\to
  \mathbb{Z}_2\times\cdots\times\mathbb{Z}_p$ and $\mathcal{H}\!:\mathbb{Z}_2\times\cdots\times\mathbb{Z}_p\to
  \mathbb{Z}_2\times\cdots\times\mathbb{Z}_p$, respectively, by
  \[
     \mathcal{M}:=\!\left[\begin{matrix}
                  \mathcal{E}_2& 0 &\cdots & 0 & 0\\
                  \mathcal{A}_3\mathcal{A}_2^* & \mathcal{E}_3 &\cdots & 0 & 0\\
                  \vdots & \vdots & \vdots & \vdots & \vdots\\
                  \mathcal{A}_{p-1}\mathcal{A}_2^* & \mathcal{A}_{p-1}\mathcal{A}_3^* &\cdots&\mathcal{E}_{p-1}& 0\\
                  \mathcal{A}_p\mathcal{A}_2^* & \mathcal{A}_p\mathcal{A}_3^* &\cdots&\mathcal{A}_p\mathcal{A}_{p-1}^*& \mathcal{E}_p \\
                  \end{matrix}\right]
  \]
  and
  \[
  \mathcal{H}:=\left[\begin{matrix}
   \mathcal{I}& \mathcal{E}_2^{-1}\mathcal{A}_2\mathcal{A}_3^*
   & \cdots &\mathcal{E}_2^{-1}\mathcal{A}_2\mathcal{A}_{p-1}^*& \mathcal{E}_2^{-1}\mathcal{A}_2\mathcal{A}_p^* \\
    0 & \mathcal{I} & \cdots&\mathcal{E}_3^{-1}\mathcal{A}_3\mathcal{A}_{p-1}^*& \mathcal{E}_3^{-1}\mathcal{A}_3\mathcal{A}_p^*\\
    \vdots & \vdots & \vdots &\vdots& \vdots \\
    0 & 0 & \cdots&\mathcal{I}&\mathcal{E}_{p-1}^{-1}\mathcal{A}_{p-1}^*\mathcal{A}_p\\
    0 & 0 & \cdots & 0& \alpha\mathcal{I}\\
   \end{matrix}\right].
 \]
 An elementary computation yields that the operator $\mathcal{G}:=\mathcal{M}\mathcal{H}$ takes the form of
  \begin{equation*}
   \left[\begin{matrix}
   \mathcal{E}_2& \mathcal{A}_2\mathcal{A}_3^*
   & \cdots &\mathcal{A}_2\mathcal{A}_{p-1}^*& \mathcal{A}_2\mathcal{A}_p^*\\
   \mathcal{A}_3\mathcal{A}_2^* & \mathcal{A}_3(\mathcal{I}\!+\!\mathcal{B}_2)\mathcal{A}_3^*\!+\!\mathcal{T}_3 &
   \cdots&\mathcal{A}_3(\mathcal{I}\!+\!\mathcal{B}_2)\mathcal{A}_{p-1}^*& \mathcal{A}_3(\mathcal{I}\!+\!\mathcal{B}_2)\mathcal{A}_p^*\\
    \vdots & \vdots & \vdots &\vdots& \vdots \\
   % \mathcal{A}_{p-1}\mathcal{A}_2^* & \mathcal{A}_{p-1}(\mathcal{I}\!+\!\mathcal{P}_2)\mathcal{A}_3^* &
%   \cdots&\mathcal{A}_{p-1}(\mathcal{I}\!+\!\sum_{j=2}^{p-2}\mathcal{B}_j)\mathcal{A}_{p-1}^*
%    &\mathcal{A}_{p-1}(\mathcal{I}\!+\!\sum_{j=2}^{p-1}\mathcal{B}_j)\mathcal{A}_p & 0\\
    \mathcal{A}_p\mathcal{A}_2^* & \mathcal{A}_p(\mathcal{I}\!+\!\mathcal{B}_2)\mathcal{A}_3^* & \cdots
    & \mathcal{A}_{p}(\mathcal{I}\!+\!\sum_{j=2}^{p-2}\mathcal{B}_j)\mathcal{A}_{p-1}
    & \mathcal{A}_p(\alpha\mathcal{I}\!+\!\sum_{j=2}^{p-1}\mathcal{B}_j)\mathcal{A}_p^*\!+\!\alpha\mathcal{T}_p\\
   \end{matrix}\right].
  \end{equation*}
  It is not hard to verify that the self-adjoint linear operator $\mathcal{G}$ is positive definite.

  \medskip

  Now we are in a position to establish the global convergence of Algorithm \ref{CSP-ADMM}.
%----------------------------------------------------------------------------------------------- Theorem
 \begin{theorem}\label{convergence1}
  Suppose that Assumption \ref{assumpA} holds and the operators $\mathcal{T}_i$ for $i=1,2,\ldots,p$
  are chosen such that $\mathcal{A}_i\mathcal{A}_i^*+\mathcal{T}_i$ are positive definite.
  Then, the following statements hold:
 \begin{itemize}
  \item[(a)] $\lim\limits_{k \rightarrow +\infty}\big\|z_i^{k+1}- \widetilde{z}_i^k\big\| = 0$ for $i=2,3,\ldots,p$
              and $\lim\limits_{k \rightarrow +\infty}\big\|x^{k+1}-x^k\big\|=0$.

  \item[(b)] The sequences $\big\{(z_1^{k},\ldots,z_p^{k})\big\}$ and
             $\big\{(\widetilde{z}_1^{k},\ldots,\widetilde{z}_p^{k})\big\}$ converge to an optimal solution to (\ref{prob}),
             and $\{x^{k}\}$ converges to an optimal solution to the dual problem of (\ref{prob}).
  \end{itemize}
  \end{theorem}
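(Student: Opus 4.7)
The plan is to convert Lemma \ref{lemma1} into a Fej\'er-monotonicity estimate in a weighted norm built around the operator $\mathcal{G}$, and then extract the usual cluster-point / whole-sequence convergence argument. The starting observation is that the first cross term on the left of \eqref{lemma1-ineq} reads exactly $2\langle \widetilde w^k-w^*,\mathcal{M}(\widetilde w^k-w^{k+1})\rangle$ once one reads off the rows of the lower-triangular $\mathcal{M}$ and recognizes $\mathcal{A}_i\mathcal{A}_i^*+\mathcal{T}_i=\mathcal{E}_i$. Coupled with the correction step \eqref{corr-step2}, together with $\widetilde z_p^{k+1}=z_p^{k+1}$ and $\widetilde z_1^{k+1}=z_1^{k+1}$, an elementary rearrangement yields the compact identity $\mathcal{H}(\widetilde w^{k+1}-\widetilde w^k)=\alpha(w^{k+1}-\widetilde w^k)$, so $\mathcal{M}(\widetilde w^k-w^{k+1})=-\alpha^{-1}\mathcal{G}(\widetilde w^{k+1}-\widetilde w^k)$. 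Since $\mathcal{G}$ is self-adjoint and positive definite, the polarization identity $2\langle a-b,\mathcal{G}(a-c)\rangle=\|a-b\|_{\mathcal{G}}^2+\|a-c\|_{\mathcal{G}}^2-\|b-c\|_{\mathcal{G}}^2$ converts the cross term into the telescoping difference $\alpha^{-1}(\|\widetilde w^k-w^*\|_{\mathcal{G}}^2-\|\widetilde w^{k+1}-w^*\|_{\mathcal{G}}^2+\|\widetilde w^{k+1}-\widetilde w^k\|_{\mathcal{G}}^2)$; analogous treatments of the $x$-block (in $\ell_2$) and the $z_1$-block (in the $\mathcal{T}_1$-norm) telescope similarly.

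The next task is to control the residual positive terms. A direct substitution using \eqref{multiplier-update} gives the identity
\begin{equation*}
\Bigl\|\sum_{i=2}^p\mathcal{A}_i^*(z_i^{k+1}-\widetilde z_i^k)-\frac{x^{k+1}-x^k}{\tau_k\sigma}\Bigr\|^2=\|F(z_1^{k+1},\widetilde z_2^k,\ldots,\widetilde z_p^k)\|^2,
\end{equation*}
so the first quadratic on the right of \eqref{lemma1-ineq} is precisely the numerator of $\delta_k$ in \eqref{tauk}. After telescoping, the net coefficient of $\|x^{k+1}-x^k\|^2$ is $(1-\tau_k)/(\tau_k^2\sigma^2)$, negative when $\tau_k>1$; but the rule \eqref{tauk-rule} forces $\tau_k-1\le\delta_k$ in that regime (and $\tau_k=\overline{\tau}<1$ in the other, which is easier), so I obtain a bona fide slack of order $\varepsilon\|x^{k+1}-x^k\|^2/(\tau_k^2\sigma^2)+\varepsilon\|\mathcal{A}_p^*(z_p^{k+1}-z_p^k)\|^2$. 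The delicate task, and the main obstacle of the proof, is controlling the cross-penalty $\alpha^{-1}\|\widetilde w^{k+1}-\widetilde w^k\|_{\mathcal{G}}^2$ against the quadratic $\sum_{i=2}^p\|z_i^{k+1}-\widetilde z_i^k\|_{\mathcal{A}_i\mathcal{A}_i^*+2\mathcal{T}_i}^2$ on the right of \eqref{lemma1-ineq}. Using $\alpha^{-1}\|\widetilde w^{k+1}-\widetilde w^k\|_{\mathcal{G}}^2=\alpha\langle w^{k+1}-\widetilde w^k,(\mathcal{H}^{-1})^*\mathcal{M}(w^{k+1}-\widetilde w^k)\rangle$ and expanding the lower-triangular product $(\mathcal{H}^{-1})^*\mathcal{M}$ block-by-block---its diagonal being $\mathcal{E}_i$ for $i<p$ and $\alpha^{-1}\mathcal{E}_p$ for $i=p$---then applying Young's inequality to off-diagonal cross products with weights absorbed into $\mathcal{E}_i$, I expect an upper bound of the form $\sum_{i=2}^{p-1}\alpha\|z_i^{k+1}-\widetilde z_i^k\|_{\mathcal{E}_i}^2+\|z_p^{k+1}-z_p^k\|_{\mathcal{E}_p}^2$ plus pieces absorbable into the $\varepsilon$-slack above. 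Since the right side of \eqref{lemma1-ineq} offers $\|\cdot\|_{\mathcal{E}_i+\mathcal{T}_i}^2$ (recalling $\widetilde z_p^k=z_p^k$), the gain $1-\alpha>0$ on each middle block and the $\|\cdot\|_{\mathcal{T}_i}^2$ reserve on all blocks leave a strictly positive residual.

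Collecting the above and noting that $\{\tau_k\}$ is nonincreasing (so that the $x$-coefficient $1/(\tau_k\sigma^2)$ telescopes in the right direction), I would define the Lyapunov functional
\begin{equation*}
\Phi_k:=\alpha^{-1}\|\widetilde w^k-w^*\|_{\mathcal{G}}^2+\frac{1}{\tau_{k-1}\sigma^2}\|x^k-x^*\|^2+\|z_1^k-z_1^*\|_{\mathcal{T}_1}^2,
\end{equation*}
and establish a descent inequality $\Phi_k-\Phi_{k+1}\ge\eta\bigl(\|x^{k+1}-x^k\|^2+\sum_{i=2}^p\|z_i^{k+1}-\widetilde z_i^k\|^2+\|z_1^{k+1}-z_1^k\|_{\mathcal{T}_1}^2+\sigma^{-1}\sum_{i=1}^p\|z_i^{k+1}-z_i^*\|_{\Sigma_i}^2\bigr)$ for some constant $\eta>0$ independent of $k$. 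Summing in $k$ immediately yields (a) as well as uniform boundedness of $\{(\widetilde w^k,x^k,z_1^k)\}$, and hence of $\{w^k\}$ via (a). For (b), let $(z_1^\infty,\ldots,z_p^\infty,x^\infty)$ be any cluster point of a convergent subsequence; from (a), $F(z_1^\infty,\ldots,z_p^\infty)=0$, and passing to the limit in the subgradient inclusion \eqref{opt-cond} via the outer semicontinuity of the maximal monotone $\partial\theta_i$ yields $-\mathcal{A}_i x^\infty\in\partial\theta_i(z_i^\infty)$, so this cluster point satisfies \eqref{optimal-cond}. Applying the descent inequality with $(z^*,x^*)$ replaced by this specific $(z^\infty,x^\infty)$ gives Fej\'er monotonicity of $\Phi_k$ toward it, which together with the convergent subsequence forces the whole sequence to converge, completing (b).
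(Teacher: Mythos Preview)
Your overall architecture matches the paper's proof: rewrite the cross term of Lemma~\ref{lemma1} as $2\langle\widetilde w^k-w^*,\mathcal{M}(\widetilde w^k-w^{k+1})\rangle$, use the correction identity $\mathcal{H}(\widetilde w^{k+1}-\widetilde w^k)=\alpha(w^{k+1}-\widetilde w^k)$ together with $\mathcal{G}=\mathcal{M}\mathcal{H}$ and polarization to telescope in the $\mathcal{G}$-norm, handle the $\tau_k>1$ deficit via the definition of $\delta_k$, and finish with the standard cluster-point argument. Two points deserve correction, however.

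\textbf{The block product is exactly diagonal.} You call the control of $\alpha^{-1}\|\widetilde w^{k+1}-\widetilde w^k\|_{\mathcal{G}}^2=\alpha\langle w^{k+1}-\widetilde w^k,(\mathcal{H}^{-1})^*\mathcal{M}(w^{k+1}-\widetilde w^k)\rangle$ ``the delicate task'' and plan to treat off-diagonal blocks by Young's inequality, hoping the residuals are ``absorbable into the $\varepsilon$-slack''. In fact there is nothing to absorb: an elementary computation (this is the crux of the paper's argument) shows that $\mathcal{M}^*\mathcal{H}^{-1}$ is \emph{exactly block-diagonal}, with entries $\mathcal{E}_i$ for $i=2,\ldots,p-1$ and $\alpha^{-1}\mathcal{E}_p$. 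The cancellation comes from the specific form $\mathcal{H}_{ij}=\mathcal{E}_i^{-1}\mathcal{A}_i\mathcal{A}_j^*$ matched against $(\mathcal{M}^*)_{ij}=\mathcal{A}_i\mathcal{A}_j^*$; for instance in the $p=3$ case one sees immediately $(\mathcal{M}^*\mathcal{H}^{-1})_{23}=\mathcal{E}_2(-\alpha^{-1}\mathcal{E}_2^{-1}\mathcal{A}_2\mathcal{A}_3^*)+\alpha^{-1}\mathcal{A}_2\mathcal{A}_3^*=0$. Consequently
\[
\alpha^{-1}\|\widetilde w^{k+1}-\widetilde w^k\|_{\mathcal{G}}^2=\alpha\sum_{i=2}^{p-1}\|z_i^{k+1}-\widetilde z_i^k\|_{\mathcal{E}_i}^2+\|z_p^{k+1}-z_p^k\|_{\mathcal{E}_p}^2,
\]
which is dominated by the right side of \eqref{lemma1-ineq} with a clean surplus $(1-\alpha)\mathcal{E}_i+\mathcal{T}_i$ on the middle blocks and $\mathcal{T}_p$ on the last. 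Your Young-inequality plan would end up bounding zero by zero, so it is not wrong, but you are missing the structural identity that makes the proof go through without any condition-number dependence.

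\textbf{The $\tau_k$-telescoping is in the wrong direction.} Your Lyapunov $\Phi_k=\alpha^{-1}\|\widetilde w^k-w^*\|_{\mathcal{G}}^2+\tfrac{1}{\tau_{k-1}\sigma^2}\|x^k-x^*\|^2+\|z_1^k-z_1^*\|_{\mathcal{T}_1}^2$ does not yield $\Phi_k-\Phi_{k+1}\ge(\text{positive})$: the descent inequality carries $\tfrac{1}{\tau_k}\|x^k-x^*\|^2$, while $\Phi_k$ carries the \emph{smaller} $\tfrac{1}{\tau_{k-1}}\|x^k-x^*\|^2$ (since $\tau_{k-1}\ge\tau_k$), so $\Phi_k-\Phi_{k+1}$ is only a \emph{lower} bound for the descent left-hand side, which is useless. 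The paper's fix is to multiply the entire inequality by $\tau_k$ first, so that the $x$-coefficient becomes the constant $1/\sigma^2$ and the varying factor $\tau_k$ sits on the $\widetilde w$- and $z_1$-blocks, where monotonicity of $\tau_k$ now points the right way: $\tau_k\|\widetilde w^{k+1}-w^*\|_{\mathcal{G}}^2\ge\tau_{k+1}\|\widetilde w^{k+1}-w^*\|_{\mathcal{G}}^2$.
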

 \begin{proof}
  Let $(z_1^*,\ldots,z_p^*)\in\mathbb{Z}_1\times\cdots\times\mathbb{Z}_p$ be an optimal
  solution to (\ref{prob}) and $x^*\in\mathbb{X}$ be the associated Lagrange multiplier.
  Then, the sequences $\big\{(z_1^{k},\ldots,z_p^{k},x^{k})\big\}$ and
  $\big\{(\widetilde{z}_1^{k},,\ldots,\widetilde{z}_p^{k})\big\}$ generated by
  Algorithm \ref{CSP-ADMM} satisfies the inequality \eqref{lemma1-ineq} of Lemma \ref{lemma1}.
  By using the expression of the above linear operator $\mathcal{M}$, it is not hard to obtain that
 \begin{align}\label{theo-equa31}
    &2\sum_{i=2}^p\!\Big\langle \widetilde{z}_i^{k}-z_i^*,\,{\textstyle\sum_{j=2}^i}\mathcal{A}_i\mathcal{A}_j^*\big(\widetilde{z}_j^k-z_j^{k+1}\big)
     +\mathcal{T}_i(\widetilde{z}_i^k-z_i^{k+1})\Big\rangle\nonumber\\
   &=2\big\langle \widetilde{w}^{k}\!-\!w^*,\mathcal{M}\big(\widetilde{w}^{k}\!-\!w^{k+1}\big)\big\rangle
    =2\big\langle \widetilde{w}^{k}\!-\!w^*,\mathcal{M}\mathcal{H}\mathcal{H}^{-1}\big(\widetilde{w}^{k}\!-\!w^{k+1}\big)\big\rangle.
  \end{align}
  From the expression of $\mathcal{H}$ and the corrected step of Algorithm \ref{CSP-ADMM},
  we can verify that
  \begin{equation}\label{temp-equa34}
    \mathcal{H}(\widetilde{w}^{k+1}-\widetilde{w}^k) = \alpha(w^{k+1}-\widetilde{w}^k),
  \end{equation}
  which by the invertibility of $\mathcal{H}$ implies that
  $\widetilde{w}^k-\widetilde{w}^{k+1}=\alpha\mathcal{H}^{-1}(\widetilde{w}^k-w^{k+1})$.
  Hence,
  \begin{align}\label{theo-equa32}
   &2\big\langle \widetilde{w}^{k}\!-\!w^*,\mathcal{M}\mathcal{H}\mathcal{H}^{-1}(\widetilde{w}^{k}-\!w^{k+1}\big)\big\rangle
   =2\alpha^{-1}\big\langle \widetilde{w}^{k}\!-\!w^*,\mathcal{G}(\widetilde{w}^k-\widetilde{w}^{k+1})\big\rangle\nonumber\\
   &=\alpha^{-1}\big\|\widetilde{w}^{k}\!-\!w^*\big\|_{\mathcal{G}}^2
     -\alpha^{-1}\big\|\widetilde{w}^{k+1}\!-\!w^*\big\|_{\mathcal{G}}^2
     \!+\alpha\big\|\mathcal{H}^{-1}(\widetilde{w}^{k}\!-\!w^{k+1}\big)\big\|_{\mathcal{G}}^2
  \end{align}
  where the second equality is using the following identity relation
  \begin{equation}\label{identity}
   2\langle u-v,\mathcal{T}(u-w)\rangle=\big\|u-v\big\|_{\mathcal{T}}^2+\big\|u-w\big\|_{\mathcal{T}}^2-\big\|v-w\big\|_{\mathcal{T}}^2.
  \end{equation}
  for a positive semidefinite linear operator $\mathcal{T}$. Using the identity \eqref{identity}, we also have
  \begin{align}\label{theo-equa33}
  &\frac{2}{\tau_{k}\sigma^2}\big\langle x^{k}-x^{k+1},x^{k}\!-\!x^*\big \rangle
      +2\big\langle z_1^k-z_1^*,\, \mathcal{T}_1(z_1^k-z_1^{k+1})\big\rangle\nonumber\\
  &=\frac{1}{\tau_{k}\sigma^2}\left(\big\|x^{k+1}-x^k\big\|^2+\big\|x^k-x^*\big\|^2-\big\|x^{k+1}-x^*\big\|^2\right)\nonumber\\
  &\quad\ +\big\|z_1^{k}-z_1^*\big\|_{\mathcal{T}_1}^2-\big\|z_1^{k+1}-z_1^*\big\|_{\mathcal{T}_1}^2
  +\big\|z_1^{k+1}-z_1^k\big\|_{\mathcal{T}_1}^2.
  \end{align}
  By combining equations \eqref{theo-equa31}, \eqref{theo-equa32} and \eqref{theo-equa33} with inequality \eqref{lemma1-ineq},
  it follows that
  \begin{align}\label{theo-equa3}
   &\frac{1}{\alpha}\left(\big\|\widetilde{w}^{k}\!-\!w^*\big\|_{\mathcal{G}}^2 -\big\|\widetilde{w}^{k+1}\!-\!w^*\big\|_{\mathcal{G}}^2\right)
   +\alpha\big\|\mathcal{H}^{-1}(\widetilde{w}^{k}\!-\!w^{k+1}\big)\big\|_{\mathcal{G}}^2+\big\|z_1^{k}\!-\!z_1^*\big\|_{\mathcal{T}_{1}}^2\nonumber \\
   &\quad -\big\|z_1^{k+1}-z_1^*\big\|_{\mathcal{T}_{1}}^2
   +\frac{1}{\tau_{k}\sigma^2}\left(\big\|x^{k+1}-x^k\big\|^2+\big\|x^k-x^*\big\|^2-\big\|x^{k+1}-x^*\big\|^2\right)\nonumber\\
   &\ge \Big\|\sum_{i=2}^p\mathcal{A}_i^*(z_i^{k+1}\!-\widetilde{z}_i^{k})-\frac{1}{\tau_{k}\sigma}(x^{k+1}-x^k)\Big\|^2
        +\frac{1}{(\tau_{k}\sigma)^2}\big\|x^{k+1}\!-\!x^{k}\big\|^2\nonumber\\
   &\quad +\sum_{i=2}^p\big\|z_i^{k+1}-\widetilde{z}_i^k\big\|_{\mathcal{A}_i\mathcal{A}_i^*+2\mathcal{T}_i}^2
          +\big\|z_1^{k+1}-z_1^k\big\|_{\mathcal{T}_1}^2
        +\frac{2}{\sigma}\sum_{i=1}^p\big\|z_i^{k+1}\!-\!z_i^*\big\|_{\Sigma_i}^2.
 \end{align}
  By the expressions of the operators $\mathcal{H}$ and $\mathcal{M}$, an elementary computation yields that
   \begin{equation*}
   \mathcal{M}^*\mathcal{H}^{-1}=
   \left[\begin{matrix}
    \mathcal{A}_2\mathcal{A}_2^*& 0 & \cdots &0& 0\\
   0 & \mathcal{A}_3\mathcal{A}_3^* & \cdots&0& 0\\
    \vdots & \vdots & \vdots &\vdots& \vdots \\
    0 & 0 & \cdots
    & \mathcal{A}_{p-1}\mathcal{A}_{p-1}^*&0\\
    0 & 0 &\cdots&0 &\frac{1}{\alpha}\mathcal{A}_p\mathcal{A}_p^*\\
   \end{matrix}\right],
  \end{equation*}
  and consequently
  \begin{align*}
   \alpha\big\|\mathcal{H}^{-1}(\widetilde{w}^{k}\!-\!w^{k+1})\big\|_{\mathcal{G}}^2
    &=\alpha\big\langle (\widetilde{w}^{k}\!-\!w^{k+1}),
    (\mathcal{H}^{-1})^*\mathcal{G}\mathcal{H}^{-1}(\widetilde{w}^{k}\!-\!w^{k+1})\big\rangle\nonumber\\
   &=\alpha\big\langle \mathcal{M}^*\mathcal{H}^{-1}(\widetilde{w}^{k}\!-\!w^{k+1}),
    (\widetilde{w}^{k}\!-\!w^{k+1})\big\rangle\nonumber\\
   &=\alpha{\textstyle\sum_{i=2}^{p-1}}\big\|\mathcal{A}_i^*(z_i^{k+1}-\widetilde{z}_i^{k})\big\|^2
     +\big\|\mathcal{A}_p^*(z_p^{k+1}-z_p^{k})\big\|^2.\nonumber
  \end{align*}
  Substituting this equality into inequality (\ref{theo-equa3}), we immediately obtain that
  \begin{align*}
  &\frac{1}{\alpha}\left(\big\|\widetilde{w}^{k}\!-\!w^*\big\|_{\mathcal{G}}^2 -\big\|\widetilde{w}^{k+1}\!-\!w^*\big\|_{\mathcal{G}}^2\right)
  +\big\|z_1^{k}-z_1^*\big\|_{\mathcal{T}_{1}}^2\nonumber\\
  &\quad -\big\|z_1^{k+1}-z_1^*\big\|_{\mathcal{T}_{1}}^2+\frac{1}{\tau_k\sigma^2}\left(\big\|x^k-x^*\big\|^2-\big\|x^{k+1}-x^*\big\|^2\right)\nonumber\\
  &\ge\sum_{i=2}^{p-1}\big\|z_i^{k+1}\!-\!\widetilde{z}_i^k\big\|_{(1-\alpha)\mathcal{A}_i\mathcal{A}_i^*+2\mathcal{T}_{i}}^2
     +\big\|z_1^{k+1}\!-\!z_1^k\big\|_{\mathcal{T}_1}^2+2\big\|z_p^{k+1}\!-\!z_p^k\big\|_{\mathcal{T}_p}^2\nonumber\\
  &\quad +\Big\|\sum_{i=2}^p\mathcal{A}_i^*(z_i^{k+1}\!-\!\widetilde{z}_i^{k})-\frac{x^{k+1}\!-\!x^k}{\tau_{k}\sigma}\Big\|^2
        +\frac{1\!-\!\tau_k}{(\tau_{k}\sigma)^2}\big\|x^{k+1}\!-\!x^{k}\big\|^2
        +\frac{2}{\sigma}\sum_{i=1}^{p}\big\|z_i^{k+1}\!-\!z_i^*\big\|_{\Sigma_i}^2.
  \end{align*}
  Let $\widetilde{\mathcal{G}}$ be an operator with the same form as $\mathcal{G}$
  except that the $p$th diagonal element is replaced by
  $\mathcal{A}_p(\alpha\mathcal{I}\!+\!\sum_{j=2}^{p-1}\mathcal{B}_j)\mathcal{A}_p^*+\alpha\mathcal{T}_p+\frac{2\alpha}{\sigma}\Sigma_p$.
  Then the last inequality is equivalent to
 \begin{align}\label{theo-equa4}
   &\frac{1}{\alpha}\left(\big\|\widetilde{w}^{k}\!-\!w^*\big\|_{\widetilde{\mathcal{G}}}^2
                 -\big\|\widetilde{w}^{k+1}\!-\!w^*\big\|_{\widetilde{\mathcal{G}}}^2\right)
   +\frac{1}{\tau_k\sigma^2}\left(\big\|x^k-x^*\big\|^2-\big\|x^{k+1}-x^*\big\|^2\right)\nonumber\\
  &\quad +\big\|z_1^{k}-z_1^*\big\|_{\mathcal{T}_{1}+\frac{2}{\sigma}\Sigma_1}^2
          -\big\|z_1^{k+1}-z_1^*\big\|_{\mathcal{T}_{1}+\frac{2}{\sigma}\Sigma_1}^2\nonumber\\
 &\ge\sum_{i=2}^{p-1}\big\|z_i^{k+1}-\widetilde{z}_i^k\big\|_{(1-\alpha)\mathcal{A}_i\mathcal{A}_i^*+2\mathcal{T}_{i}}^2
     +\big\|z_1^{k+1}-z_1^k\big\|_{\mathcal{T}_1}^2+2\big\|z_p^{k+1}-z_p^k\big\|_{\mathcal{T}_p}^2\nonumber\\
  &\quad +\Big\|\sum_{i=2}^p\mathcal{A}_i^*(z_i^{k+1}\!-\widetilde{z}_i^{k})-\frac{1}{\tau_{k}\sigma}(x^{k+1}-x^k)\Big\|^2
         +\frac{1-\tau_k}{(\tau_{k}\sigma)^2}\big\|x^{k+1}-x^{k}\big\|^2\nonumber\\
  &\qquad +\frac{2}{\sigma}\big\|z_1^{k}-z_1^*\big\|_{\Sigma_1}^2
         +\frac{2}{\sigma}\big\|z_p^{k}-z_p^*\big\|_{\Sigma_p}^2
         +\frac{2}{\sigma}\sum_{i=2}^{p-1}\big\|z_i^{k+1}\!-\!z_i^*\big\|_{\Sigma_i}^2.
  \end{align}
  Recall that $F(z_1,z_2,\ldots,z_p)=\sum_{i=1}^p\mathcal{A}_i^*z_i-c$. Therefore, we have that
  \[
    \sum_{i=2}^p\mathcal{A}_i^*(z_i^{k+1}\!-\widetilde{z}_i^{k})
    =F(z_1^{k+1},z_2^{k+1},\ldots,z_p^{k+1})-F(z_1^{k+1},\widetilde{z}_2^{k},\ldots,\widetilde{z}_p^{k}).
  \]
  In addition, from equation (\ref{multiplier-update}) it follows that
  \(
    x^{k+1}-x^k=\sigma\tau_{k}F(z_1^{k+1},z_2^{k+1},\ldots,z_p^{k+1}).
  \)
  Substituting the two equalities into (\ref{theo-equa4}), we obtain the desired inequality
  \begin{align}\label{theo-result}
     &\frac{1}{\alpha}\left(\big\|\widetilde{w}^{k}\!-\!w^*\big\|_{\widetilde{\mathcal{G}}}^2
                   -\big\|\widetilde{w}^{k+1}\!-\!w^*\big\|_{\widetilde{\mathcal{G}}}^2\right)
     +\frac{1}{\tau_k\sigma^2}\left(\big\|x^k\!-\!x^*\big\|^2-\big\|x^{k+1}\!-\!x^*\big\|^2\right)\nonumber\\
    &\quad +\big\|z_1^{k}-z_1^*\big\|_{\mathcal{T}_{1}+\frac{2}{\sigma}\Sigma_1}^2
            -\big\|z_1^{k+1}-z_1^*\big\|_{\mathcal{T}_{1}+\frac{2}{\sigma}\Sigma_1}^2\nonumber\\
    &\ge\sum_{i=2}^{p-1}\big\|z_i^{k+1}-\widetilde{z}_i^k\big\|_{(1-\alpha)\mathcal{A}_i\mathcal{A}_i^*+2\mathcal{T}_{i}}^2
       +\big\|z_1^{k+1}-z_1^k\big\|_{\mathcal{T}_1}^2+2\big\|z_p^{k+1}-z_p^k\big\|_{\mathcal{T}_p}^2\nonumber\\
     &\quad +\big\|F(z_1^{k+1},\widetilde{z}_2^{k},\ldots,\widetilde{z}_p^{k})\big\|^2
             + (1-\tau_k)\big\|F(z_1^{k+1},z_2^{k+1},\ldots,z_p^{k+1})\big\|^2\nonumber\\
    &\qquad +\frac{2}{\sigma}\big\|z_1^{k}-z_1^*\big\|_{\Sigma_1}^2
           +\frac{2}{\sigma}\big\|z_p^{k}-z_p^*\big\|_{\Sigma_p}^2
           +\frac{2}{\sigma}\sum_{i=2}^{p-1}\big\|z_i^{k+1}\!-\!z_i^*\big\|_{\Sigma_i}^2.
  \end{align}

  By the definition of $\tau_k$ in \eqref{tauk-rule}, we have that the sequence $\{\tau_k\}$ is
  nonincreasing and $\tau_k=\overline{\tau}$ for all $k\ge \overline{k}$ once $\tau_{\overline{k}}=\overline{\tau}$.
  In view of this, we next prove the results of part (a) and part (b) by the case
  where $\tau_k>\overline{\tau}$ for all $k$ or $\tau_k=\overline{\tau}$ for all $k\ge \overline{k}$.

  \medskip
  \noindent
  {\bf Case 1: $\tau_k>\overline{\tau}$ for all $k$}. In this case, the definition of $\tau_k$ implies
  $\delta_k\ge 1-\tau_k$, and then
  \begin{align*}
    &\big\|F(z_1^{k+1},\widetilde{z}_2^{k},\ldots,\widetilde{z}_p^{k})\big\|^2
     +(1\!-\!\tau_{k})\big\|F(z_1^{k+1},z_2^{k+1},\ldots,z_p^{k+1})\big\|^2\nonumber\\
    &\ge \varepsilon\big\|F(z_1^{k+1},z_2^{k+1},\ldots,z_p^{k+1})\big\|^2
         +\varepsilon\big\|\mathcal{A}_p^*(z_{p}^{k+1}-z_p^{k})\big\|^2.
  \end{align*}
  Together with the above inequality \eqref{theo-result}, we immediately obtain that
  \begin{align}\label{theo-ineq31}
   &\frac{\tau_k}{\alpha}\left(\big\|\widetilde{w}^{k}\!-\!w^*\big\|_{\widetilde{\mathcal{G}}}^2
                 -\big\|\widetilde{w}^{k+1}\!-\!w^*\big\|_{\widetilde{\mathcal{G}}}^2\right)
   +\frac{1}{\sigma^2}\left(\big\|x^k\!-\!x^*\big\|^2-\big\|x^{k+1}\!-\!x^*\big\|^2\right)\nonumber\\
   &\quad +\tau_k\big\|z_1^{k}-z_1^*\big\|_{\mathcal{T}_{1}+\frac{2}{\sigma}\Sigma_1}^2
          -\tau_k\big\|z_1^{k+1}-z_1^*\big\|_{\mathcal{T}_{1}+\frac{2}{\sigma}\Sigma_1}^2\nonumber\\
  &\ge \tau_k\sum_{i=2}^{p-1}\big\|z_i^{k+1}-\widetilde{z}_i^k\big\|_{(1-\alpha)\mathcal{A}_i\mathcal{A}_i^*+2\mathcal{T}_{i}}^2
       +\tau_k\big\|z_1^{k+1}-z_1^k\big\|_{\mathcal{T}_1}^2\nonumber\\
   &\quad +\tau_k\varepsilon\big\|F(z_1^{k+1},z_2^{k+1},\ldots,z_p^{k+1})\big\|^2
         +\tau_k\big\|z_{p}^{k+1}-z_p^{k}\big\|_{2\mathcal{T}_p+\varepsilon\mathcal{A}_p\mathcal{A}_p^*}^2\nonumber\\
   &\quad +\frac{2\tau_k}{\sigma}\big\|z_1^{k}-z_1^*\big\|_{\Sigma_1}^2
          +\frac{2\tau_k}{\sigma}\big\|z_p^{k}-z_p^*\big\|_{\Sigma_p}^2
          +\frac{2\tau_k}{\sigma}\sum_{i=2}^{p-1}\big\|z_i^{k+1}\!-\!z_i^*\big\|_{\Sigma_i}^2.
 \end{align}
  Notice that $\tau_{k+1}\le \tau_{k}$ for all $k\ge 1$ in this case. Therefore, we have that
 \begin{align}\label{theo-ineq32}
  &\sum_{k=0}^\infty\tau_k\left\{\sum_{i=2}^{p-1}\big\|z_i^{k+1}-\widetilde{z}_i^k\big\|_{(1-\alpha)\mathcal{A}_i\mathcal{A}_i^*+2\mathcal{T}_{i}}^2
       +\big\|z_1^{k+1}-z_1^k\big\|_{\mathcal{T}_1}^2+\varepsilon\big\|F(z_1^{k+1},\ldots,z_p^{k+1})\big\|^2\right.\nonumber\\
   &\quad \left. +\big\|z_{p}^{k+1}\!-\!z_p^{k}\big\|_{2\mathcal{T}_p + \varepsilon\mathcal{A}_p\mathcal{A}_p^*}^2
          +\frac{2}{\sigma}\big\|z_1^{k}-z_1^*\big\|_{\Sigma_1}^2
          +\frac{2}{\sigma}\big\|z_p^{k}-z_p^*\big\|_{\Sigma_p}^2
          +\frac{2}{\sigma}\sum_{i=2}^{p-1}\big\|z_i^{k+1}-\!z_i^*\big\|_{\Sigma_i}^2\right\}\nonumber\\
  &\le \frac{1}{\alpha}\sum_{k=0}^\infty\left[\tau_k\big\|\widetilde{w}^{k}\!-\!w^*\big\|^2
               -\tau_{k+1}\big\|\widetilde{w}^{k+1}\!-\!w^*\big\|^2\right]
        +\frac{1}{\sigma^2}\sum_{k=0}^\infty\left[\big\|x^{k}-x^*\big\|^2\!-\!\big\|x^{k+1}-x^*\big\|^2\right]\nonumber\\
  &\qquad +\sum_{k=0}^\infty\left[\tau_k\big\|z_1^{k}-z_1^*\big\|_{\mathcal{T}_{1}+\frac{2}{\sigma}\Sigma_1}^2
     -\tau_{k+1}\big\|z_1^{k+1}-z_1^*\big\|_{\mathcal{T}_{1}+\frac{2}{\sigma}\Sigma_1}^2\right]\nonumber\\
  &\le \frac{\tau_0}{\alpha}\big\|\widetilde{w}^{0}-w^*\big\|^2+\frac{1}{\sigma^2}\big\|x^0-x^*\big\|^2
       +\tau_0\big\|z_1^{0}-z_1^*\big\|_{\mathcal{T}_{1}+\frac{2}{\sigma}\Sigma_1}^2.
 \end{align}
  Since $\alpha\in(0,1)$ and $\tau_k>\overline{\tau}>0$, from inequality \eqref{theo-ineq32} it follows that
  \begin{equation}\label{theo-limit1}
    \lim_{k\to+\infty}\left[{\textstyle
    \sum_{i=2}^{p-1}\big\|z_i^{k+1}-\widetilde{z}_i^k\big\|_{(1-\alpha)\mathcal{A}_i\mathcal{A}_i^*+2\mathcal{T}_{i}}^2}
    +\big\|z_{p}^{k+1}\!-\!z_p^{k}\big\|_{2\mathcal{T}_p+\varepsilon\mathcal{A}_p\mathcal{A}_p^*}^2\right]=0,
  \end{equation}
  which, together with the choice of $\mathcal{T}_i$ for $i=2,3,\ldots,p$, implies that
  \begin{equation}\label{theo-equa34}
    \lim_{k\to+\infty}\|z_i^{k+1}\!-\widetilde{z}_i^{k}\|=0,\quad i=2,3,\ldots,p.
  \end{equation}
  Notice that inequality \eqref{theo-ineq32} also implies that
  \(
    \lim_{k\to\infty}\big\|F(z_1^{k+1},\ldots,z_p^{k+1})\big\|^2=0.
  \)
  This, together with equation \eqref{multiplier-update} and $\tau_k\le\tau_0$, yields that
  \begin{equation}\label{theo-equa35}
   \lim_{k\to+\infty}\big\|x^{k+1}-x^k\big\|=\lim_{k\to+\infty}\big\|F(z_1^{k+1},z_2^{k+1},\ldots,z_p^{k+1})\big\|=0.
  \end{equation}
  The last two equations show that the results of part (a) hold.
  We next prove that the conclusions of part (b) hold.
  Notice that equation \eqref{theo-ineq31} and $\tau_{k+1}\le\tau_k$ imply that
  \begin{align*}
    &\frac{\tau_k}{\alpha}\big\|\widetilde{w}^{k}-w^*\big\|_{\widetilde{\mathcal{G}}}^2 +\frac{1}{\sigma^2}\big\|x^k-x^*\big\|^2
    +\tau_k\big\|z_1^{k}-z_1^*\big\|_{\mathcal{T}_{1}+\frac{2}{\sigma}\Sigma_1}^2\\
    &\ge\frac{\tau_{k+1}}{\alpha}\big\|\widetilde{w}^{k+1}-w^*\big\|_{\widetilde{\mathcal{G}}}^2+\frac{1}{\sigma^2}\big\|x^{k+1}-x^*\big\|^2
    +\tau_{k+1}\big\|z_1^{k+1}-z_1^*\big\|_{\mathcal{T}_{1}+\frac{2}{\sigma}\Sigma_1}^2.
  \end{align*}
  Hence, the sequence $\big\{\frac{\tau_k}{\alpha}\|\widetilde{w}^{k}\!-\!w^*\|_{\widetilde{\mathcal{G}}}^2
  +\frac{1}{\sigma^2}\|x^k\!-\!x^*\|^2 +\tau_k\|z_1^{k}\!-\!z_1^*\|_{\mathcal{T}_{1}+\frac{2}{\sigma}\Sigma_1}^2\!\big\}$
  is convergent, which implies that the sequences $\{\widetilde{w}^k\}$ and $\{x^k\}$ are bounded,
  and $\big\{\|z_1^{k}\!-\!z_1^*\|_{\mathcal{T}_{1}+\frac{2}{\sigma}\Sigma_1}^2\!\big\}$ is bounded.
  Together with $\lim_{k\to+\infty}\|w^{k+1}\!-\widetilde{w}^{k}\|=0$ in part (a), it follows that
  $\{w^k\}$ is bounded. From the boundedness of $\{w^k\}$ and $\{F(z_1^{k+1},\ldots,z_p^{k+1})\}$
  we deduce that the sequence $\{\mathcal{A}_1^*z_1^k\}$ is also bounded, which implies that
  the boundedness of $\big\{\|z_1^{k}\!-\!z_1^*\|_{\mathcal{A}_1\mathcal{A}_1^*+\mathcal{T}_{1}+\frac{2}{\sigma}\Sigma_1}^2\!\big\}$.
  Thus, there exists a subsequence $\{(z_1^k,\ldots,z_p^k,x^k)\}_{k\in K}$ that converges to a limit point,
  to say $(z_1^{\infty},\ldots,z_p^{\infty},x^{\infty})$. By part (a),
  $\{(\widetilde{z}_1^k,\ldots,\widetilde{z}_p^k)\}_{k\in K}$ also converges to
  $(z_1^{\infty},\ldots,z_p^{\infty})$.

  \medskip

  Next we argue that $(z_1^{\infty},\ldots,z_p^{\infty})$ is an optimal solution to problem \eqref{prob}
  and $x^{\infty}$ is the associated Lagrange multiplier.
  Since $\lim_{k\to+\infty}\|F(z_1^{k+1},\ldots,z_p^{k+1})\|=0$, we have
  \(
    \mathcal{A}_1^*z_1^{\infty}+\mathcal{A}_2^*z_2^{\infty}+\cdots+\mathcal{A}_p^*z_p^{\infty}-c=0.
  \)
  In addition, taking the limit $k\to\infty$ with $k\in K$ on the both sides of \eqref{opt-cond}
  and using the closedness of the graphs of $\partial\theta_i$ (see \cite{Roc70}),
  we have $-\mathcal{A}_ix^{\infty}\in\partial\theta_i(z_i^{\infty})$ for $i=1,\ldots,p$.
  The two sides and equation (\ref{optimal-cond}) imply that $(z_1^{\infty},\ldots,z_p^{\infty})$
  is an optimal solution of \eqref{prob} and $x^{\infty}$ is the associated Lagrange multiplier.

  \medskip

  To complete the proof of part (b), we only need to show that $(z_1^{\infty},\ldots,z_p^{\infty},x^{\infty})$
  is the unique limit point of $\{(z_1^k,\ldots,z_p^k,x^k)\}$. Recall that
  $(z_1^{\infty},\ldots,z_p^{\infty})$ is an optimal solution to \eqref{prob}
  and $x^{\infty}$ is the associated Lagrange multiplier. So, we could replace
  $(z_1^{*},\ldots,z_p^{*},x^*)$ with $(z_1^{\infty},\ldots,z_p^{\infty},x^{\infty})$
  in the previous arguments, starting from \eqref{opt-cond}. Thus,
  inequalities \eqref{theo-ineq31}-\eqref{theo-ineq32} still hold with $(z_1^*,\ldots,z_p^*,x^*)$
  replaced by $(z_1^\infty,\ldots,z_p^\infty,x^\infty)$, and then
  $\big\{\frac{\tau_k}{\alpha}\|\widetilde{w}^{k}-w^{\infty}\|_{\widetilde{\mathcal{G}}}^2
  +\frac{1}{\sigma^2}\|x^k-x^{\infty}\|^2+\tau_k\|z_1^{k}-z_1^{\infty}\|_{\mathcal{T}_{1}+\frac{2}{\sigma}\Sigma_1}^2\big\}$
  is convergent. Since this sequence is nonnegative and has a limit point $0$ for the subsequence
  $\{(\widetilde{z}_1^k,\ldots,\widetilde{z}_p^k,x^k)\}_{k\in K}$, we have
  \[
    \lim_{k\to\infty}\frac{\tau_k}{\alpha}\big\|\widetilde{w}^{k}-w^{\infty}\big\|_{\widetilde{\mathcal{G}}}^2
  +\frac{1}{\sigma^2}\big\|x^k-x^{\infty}\big\|^2+\tau_k\big\|z_1^{k}-z_1^{\infty}\big\|_{\mathcal{T}_{1}+\frac{2}{\sigma}\Sigma_1}^2
  =0.
  \]
  Moreover, $\lim_{k\to\infty}\|\mathcal{A}_1^*z_1^k\|=\|\mathcal{A}_1^*z_1^{\infty}\|$
  since $\lim_{k\to\infty}z_1^k = z_1^{\infty}$. By the results of part (a), $\tau_k>\overline{\tau}$
  and the positive definiteness of $\mathcal{A}_{1}\mathcal{A}_{1}^*+\mathcal{T}_{1}+\frac{2}{\sigma}\Sigma_1$,
  it follows that
  \begin{equation}\label{theo-equa36}
    \lim_{k\to\infty}z_i^k = z_i^{\infty}\quad{\rm for}\ \ i=1,2,\ldots,p\ \ {\rm and}\ \
    \lim_{k\to\infty}x^{k}=x^{\infty}.
  \end{equation}
   Thus, we show that $(z_1^{\infty},\ldots,z_p^{\infty},x^{\infty})$
  is the unique limit point of $\{(z_1^k,\ldots,z_p^k,x^k)\}$.

  \medskip
  \noindent
  {\bf Case 2: $\tau_k=\overline{\tau}$ for all $k\ge \overline{k}$} with $\overline{k}\in\mathbb{N}$.
  Now inequality (\ref{theo-result}) is specialized as
  \begin{align*}
   &\frac{1}{\alpha}\left(\big\|\widetilde{w}^{k}\!-\!w^*\big\|_{\widetilde{\mathcal{G}}}^2
                 -\big\|\widetilde{w}^{k+1}\!-\!w^*\big\|_{\widetilde{\mathcal{G}}}^2\right)
   +\frac{1}{\overline{\tau}\sigma^2}\left(\big\|x^k\!-\!x^*\big\|^2-\big\|x^{k+1}\!-\!x^*\big\|^2\right)\nonumber\\
  &\quad +\big\|z_1^{k}-z_1^*\big\|_{\mathcal{T}_{1}+\frac{2}{\sigma}\Sigma_1}^2
          -\big\|z_1^{k+1}-z_1^*\big\|_{\mathcal{T}_{1}+\frac{2}{\sigma}\Sigma_1}^2\nonumber\\
 &\ge\sum_{i=2}^{p-1}\big\|z_i^{k+1}-\widetilde{z}_i^k\big\|_{(1-\alpha)\mathcal{A}_i\mathcal{A}_i^*+2\mathcal{T}_{i}}^2
     +\big\|z_1^{k+1}-z_1^k\big\|_{\mathcal{T}_1}^2+2\big\|z_p^{k+1}-z_p^k\big\|_{\mathcal{T}_p}^2\nonumber\\
   &\quad +\big\|F(z_1^{k+1},\widetilde{z}_2^{k},\ldots,\widetilde{z}_p^{k})\big\|^2
           + (1-\overline{\tau})\big\|F(z_1^{k+1},z_2^{k+1},\ldots,z_p^{k+1})\big\|^2\nonumber\\
  &\qquad +\frac{2}{\sigma}\big\|z_1^{k}-z_1^*\big\|_{\Sigma_1}^2
         +\frac{2}{\sigma}\big\|z_p^{k}-z_p^*\big\|_{\Sigma_p}^2
         +\frac{2}{\sigma}\sum_{i=2}^{p-1}\big\|z_i^{k+1}\!-\!z_i^*\big\|_{\Sigma_i}^2.
  \end{align*}
  Since $\overline{\tau}\in(0,1)$, using the same arguments as those for Case 1, we have
  \begin{align}\label{theo-equa40}
   &\lim_{k\to+\infty}{\textstyle\sum_{i=2}^{p-1}\big\|z_i^{k+1}-\widetilde{z}_i^k\big\|_{(1-\alpha)\mathcal{A}_i\mathcal{A}_i^*+2\mathcal{T}_{i}}^2=0},\ \
     \lim_{k\to\infty}\|z_p^{k+1}-z_p^k\|_{\mathcal{T}_p}^2=0,\\
   &\lim_{k\to\infty}\big\|F(z_1^{k+1},\widetilde{z}_2^{k},\ldots,\widetilde{z}_p^{k})\big\|^2=0,\ \
     \lim_{k\to\infty}\big\|F(z_1^{k+1},z_2^{k+1},\ldots,z_p^{k+1})\big\|^2=0.
     \label{theo-equa41}
  \end{align}
  Combining the first limit in \eqref{theo-equa40} with the assumption of $\mathcal{T}_i$
  for $i=2,\ldots,p\!-\!1$, we have
  \begin{equation}\label{theo-equa42}
    \lim_{k\to+\infty}\|z_i^{k+1}\!-\widetilde{z}_i^{k}\|=0,\quad i=2,3,\ldots,p-1.
  \end{equation}
  From equations \eqref{theo-equa41} and \eqref{theo-equa42} and the second limit in \eqref{theo-equa40},
  we may deduce that
  \[
    \lim_{k\to\infty}\big\|z_p^{k+1}-z_p^k\big\|_{2\mathcal{T}_p+\mathcal{A}_p\mathcal{A}_p^*}=0.
  \]
  This, along with the assumption of $\mathcal{T}_p$, implies that $ \lim_{k\to+\infty}\|z_p^{k+1}\!-z_p^{k}\|=0$,
  while the second limit in \eqref{theo-equa41} implies that $ \lim_{k\to+\infty}\|x^{k+1}-x^{k}\|=0$.
  Thus, we complete the proof of part (a). Using the same arguments as those for Case 1 yields part (b).
  \end{proof}

  \medskip

  If all the linear operators $\mathcal{A}_i$ are surjective,
  then one can also obtain the conclusion of Theorem \ref{convergence1} by
  setting all $\mathcal{T}_i$ to be the zero operator in the proof of Theorem \ref{convergence1}.
 %-----------------------------------------------------------------------------------------------Theorem
 \begin{corollary}\label{convergence2}
  Suppose that Assumption \ref{assumpA} holds and the linear operators $\mathcal{A}_i$ for
  $i=1,2,\ldots,p$ are all surjective. Then, we have the following conclusions:
  \begin{itemize}
   \item[(a)]  $\lim\limits_{k \rightarrow +\infty}\big\|z_i^{k+1}- \widetilde{z}_i^k\big\| = 0$
                for $i=2,3,\ldots,p$ and $\lim\limits_{k \rightarrow +\infty}\big\|x^{k+1}-x^k\big\|=0$.

   \item[(b)] The sequences $\big\{(z_1^{k},\ldots,z_p^{k})\big\}$ and
              $\big\{(\widetilde{z}_1^{k},\ldots,\widetilde{z}_p^{k})\big\}$
              converge to an optimal solution to (\ref{prob}),
              and $\{x^{k}\}$ converges to an optimal solution to the dual problem of (\ref{prob}).
  \end{itemize}
 \end{corollary}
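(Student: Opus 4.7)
The plan is to re-run the entire proof of Theorem \ref{convergence1} with $\mathcal{T}_i=0$ for all $i=1,\ldots,p$, using surjectivity of each $\mathcal{A}_i$---equivalently, positive-definiteness of $\mathcal{A}_i\mathcal{A}_i^*$ on $\mathbb{Z}_i$---to supply every piece of positive-definiteness that $\mathcal{T}_i+\mathcal{A}_i\mathcal{A}_i^*$ previously guaranteed.

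First, I would observe that with $\mathcal{T}_i=0$ and $\mathcal{A}_i$ surjective, each subproblem in (S.1) is still strongly convex (so Algorithm \ref{CSP-ADMM} remains well-defined), and the operator $\mathcal{G}$ constructed before Theorem \ref{convergence1} remains positive definite. Consequently, the derivation of Lemma \ref{lemma1} is unaffected---every $\mathcal{T}_i$-dependent term simply drops from the left and right sides of \eqref{lemma1-ineq}.

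Second, I would re-trace the chain of equalities and inequalities from \eqref{theo-equa31} through \eqref{theo-result} with $\mathcal{T}_i=0$. The structure is unchanged; the right-hand side of \eqref{theo-result} keeps the coefficients $(1-\alpha)\mathcal{A}_i\mathcal{A}_i^*$ for $i=2,\ldots,p-1$, together with a positive multiple of $\mathcal{A}_p\mathcal{A}_p^*$ (either $\varepsilon\mathcal{A}_p\mathcal{A}_p^*$ in Case 1 or the combination deduced in Case 2) for the $p$-th block, each of which is positive definite by surjectivity. The split into Case 1 ($\tau_k>\overline{\tau}$ for all $k$) and Case 2 ($\tau_k=\overline{\tau}$ eventually) then proceeds verbatim. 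Summing the telescoping Lyapunov inequality yields $\|z_i^{k+1}-\widetilde{z}_i^k\|_{\mathcal{A}_i\mathcal{A}_i^*}\to 0$ for $i=2,\ldots,p-1$, $\|z_p^{k+1}-z_p^k\|_{\mathcal{A}_p\mathcal{A}_p^*}\to 0$, and $\|F(z_1^{k+1},\ldots,z_p^{k+1})\|\to 0$; surjectivity of each $\mathcal{A}_i$ upgrades these to norm convergence, which combined with the vanishing correction in (S.2) delivers part (a).

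For part (b), boundedness of $\{\widetilde{w}^k\}$ and $\{x^k\}$ follows from convergence of the Lyapunov quantity exactly as before, and part (a) transfers the bound to $\{w^k\}$. The main point of vigilance---and what I expect to be the only non-trivial step---is boundedness of $\{z_1^k\}$ without the crutch of $\mathcal{T}_1$. Here I would reuse the argument in the original proof: the feasibility residual vanishes and $\{w^k\}$ is bounded, so $\{\mathcal{A}_1^*z_1^k\}$ is bounded, and since $\mathcal{A}_1\mathcal{A}_1^*$ is positive definite by surjectivity, $\{z_1^k\}$ is itself bounded. A cluster-point analysis using closedness of the graphs of the $\partial\theta_i$ and the optimality characterization \eqref{optimal-cond} identifies any limit point as a primal-dual solution. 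Uniqueness follows by re-running the Lyapunov estimate with the cluster point in place of $(z_1^*,\ldots,z_p^*,x^*)$; the convergent nonnegative Lyapunov sequence has $0$ as a subsequential limit and hence converges to $0$, and positive-definiteness of $\mathcal{A}_1\mathcal{A}_1^*+\tfrac{2}{\sigma}\Sigma_1$ (replacing the earlier $\mathcal{T}_1+\mathcal{A}_1\mathcal{A}_1^*+\tfrac{2}{\sigma}\Sigma_1$) forces full convergence of $\{z_1^k\}$, completing part (b).
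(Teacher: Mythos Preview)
Your proposal is correct and follows exactly the route the paper takes: the paper's proof of Corollary~\ref{convergence2} is a single sentence immediately preceding the statement, observing that when every $\mathcal{A}_i$ is surjective one may set all $\mathcal{T}_i=0$ in the proof of Theorem~\ref{convergence1}. Your write-up simply unpacks that sentence---tracing how each appeal to positive-definiteness of $\mathcal{T}_i+\mathcal{A}_i\mathcal{A}_i^*$ is replaced by positive-definiteness of $\mathcal{A}_i\mathcal{A}_i^*$ alone, and in particular how boundedness and full convergence of $\{z_1^k\}$ are recovered via $\{\mathcal{A}_1^*z_1^k\}$---so there is nothing to correct or contrast.
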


%----------------------------------------------------------------------------------------------------Section 4
 \section{Applications to doubly nonnegative SDPs}

  Let $\mathcal{S}_{+}^n$ be the cone of $n\times n$ symmetric and positive semidefinite matrices
  in the space $\mathbb{S}^n$ of $n\times\! n$ symmetric matrices, which is endowed with the Frobenius
  inner product and its induced norm $\|\cdot\|$. The doubly nonnegative SDP problem takes the form of
  \begin{align}\label{PDNN-SDP}
   \max\Big\{-\!\big\langle C,X\big\rangle\ |\ \mathcal{A}_{E}X=b_{E},\ \mathcal{A}_IX\ge b_I,\ X\in\mathcal{S}_{+}^n,\ X-M\in\mathcal{K}\Big\},
  \end{align}
  where $b_{E}\in\mathbb{R}^{m_{E}},b_{I}\in\mathbb{R}^{m_{I}}$, and $X\!-\!M\in\mathcal{K}$ means that
  every entry of $X\!-\!M$ is nonnegative (of course, one can only require a subset of
  the entries of $X\!-\!M$ to be nonnegative or non-positive or free).
  An elementary calculation yields the dual of problem \eqref{PDNN-SDP} as
  \begin{align}\label{DDNN-SDP}
   &\min\ \big(\delta_{\mathbb{R}_{+}^{m_{I}}}(y_I)-\langle b_I,y_I\rangle\big)
     +\big(\delta_{\mathcal{K}^*}(Z)-\langle M,Z\rangle\big)-\langle b_E,y_E\rangle+\delta_{\mathcal{S}_{+}^n}(S)\nonumber\\
   &\ {\rm s.t.}\ \ \mathcal{A}_I^*y_I+Z+\mathcal{A}_E^*y_E+S=C,
  \end{align}
  where $\mathcal{K}^*$ is the positive dual cone of $\mathcal{K}$.
  Here we always assume that $\mathcal{A}_E$ is surjective. Clearly, problem \eqref{DDNN-SDP} takes
  the form of \eqref{prob} with $p=4$, and takes the form of \eqref{prob} with $p=3$ if
  the inequality constraint $\mathcal{A}_IX\ge b_I$ is removed. Hence, we can apply
  the proposed corrected ADMM with adaptive step-size for solving problem \eqref{DDNN-SDP}.

  \medskip

  For problem \eqref{DDNN-SDP}, instead of using the constraint qualification (CQ) in Assumption \ref{assumpA},
  we use the following more familiar Slater's CQ in the field of conic optimization.
  \begin{assumption}\label{assumpB}
  (a) For problem \eqref{PDNN-SDP}, there exists a point $\widehat{X}\in\mathbb{S}^n$ such that
      \[
        \mathcal{A}_{E}\widehat{X}=b_{E},\ \mathcal{A}_I\widehat{X}\ge b_I,\ \widehat{X}\in{\rm int}\,(\mathcal{S}_{+}^n),\ \widehat{X}\in\mathcal{K}.
      \]

  \noindent
  (b) For problem \eqref{DDNN-SDP}, there exists a point $(\widehat{y}_I,\widehat{Z},\widehat{y}_E,\widehat{S})
  \in\!\mathbb{R}^{m_I}\times\mathbb{S}^n\times\mathbb{R}^{m_E}\times\mathbb{S}^n$ such that
       \[
        \mathcal{A}_I^*\widehat{y}_I+\widehat{Z}+\mathcal{A}_{E}^*\widehat{y}_E+\widehat{S}=c,\ \widehat{S}\in{\rm int}\,(\mathcal{S}_{+}^n),\ \widehat{Z}\in\mathcal{K}^*,\
        \widehat{y}_I\in\mathbb{R}_{+}^{m_I}.
      \]
 \end{assumption}
  From \cite[Corollary 5.3.6]{BL06}, under Assumption \ref{assumpB},
  the strong duality for \eqref{PDNN-SDP} and  \eqref{DDNN-SDP} holds, and the following
  Karush-Kuhn-Tucker (KKT) condition has nonempty solutions:
  \begin{equation}\label{KKT-cond}
    \left\{\begin{array}{l}
     \mathcal{A}_{E}X-b_{E}=0,\\
     \mathcal{A}_I^*y_I+Z+\mathcal{A}_E^*y_E+S-C=0,\\
     \langle X,S\rangle=0,\ X\in\mathcal{S}_{+}^n,\ S\in\mathcal{S}_{+}^n,\\
     \langle X,Z\rangle =0,\ X\in\mathcal{K},\ Z\in\mathcal{K}^*,\\
     \langle y_I,\mathcal{A}_IX-b_I\rangle=0,\ \mathcal{A}_IX-b_I\ge 0,\ y_I\in\mathbb{R}_{+}^{m_I}.
     \end{array}\right.
  \end{equation}

   Let $\sigma\!>0$ be given. The augmented Lagrange function for \eqref{DDNN-SDP} is defined as follows
   \begin{align}
     L_{\sigma}(y_I,Z,y_E,S;X)&:=\delta_{\mathbb{R}_{+}^{m_{I}}}(y_I)-\langle b_I,y_I\rangle+(\delta_{\mathcal{K}^*}(Z)-\langle M,Z\rangle)
     -\langle b_E,y_E\rangle\nonumber\\
     &\qquad +\delta_{\mathcal{S}_{+}^n}(S) +\langle X,\mathcal{A}_I^*y_I+Z+\mathcal{A}_E^*y_E+S-C\rangle \nonumber\\
     &\qquad +\frac{\sigma}{2}\big\|\mathcal{A}_I^*y_I+Z+\mathcal{A}_E^*y_E+S-C\big\|^2\nonumber\\
    &\qquad \forall(y_I,Z,y_E,S,X)\in\mathbb{R}^{m_I}\times\mathbb{S}^n\times\mathbb{R}^{m_E}\times\mathbb{S}^n\times\mathbb{S}^n.\nonumber
   \end{align}
   Notice that the minimization of $L_{\sigma}(y_I,Z,y_E,S;X)$ with respect to variables $Z$ and $S$,
   respectively, have a closed form solution, while the minimization of $L_{\sigma}(y_I,Z,y_E,S;X)$
   with respect to variable $y_E$ is solvable since the operator $\mathcal{A}_E$ is assumed to be surjective.
   Hence, when applying the corrected semi-proximal ADMM for solving \eqref{DDNN-SDP},
   we do not introduce any proximal term to the three minimization problems.
   In addition, we adopt the solution order $y_I\rightarrow Z\rightarrow y_E\rightarrow S$
   for the subproblems involved in (S.1).
  By Remark \ref{Remark2.2}, such a solution order can guarantee that the hard constraints
  $y_I\in\mathbb{R}_{+}^{m_I}$ and $S\in\mathcal{S}_{+}^n$ are satisfied, and when the inequality
  constraint $\mathcal{A}_IX\ge b_I$ is removed, the hard constraints $Z\in\mathcal{K}^*$ and
  $S\in\mathcal{S}_{+}^n$ are satisfied. Extensive numerical tests indicate that
  such a solution order is the best one. Thus, we obtain the following algorithm.

  \bigskip

  \setlength{\fboxrule}{0.5pt}
  \noindent
  \fbox{
  \parbox{0.95\textwidth}
  {{\bf Algorithm\ CADMM: A corrected 4-block ADMM for solving \eqref{DDNN-SDP}}\

   \medskip
   \noindent
   Given parameters $\sigma>0,\alpha=0.999, \overline{\tau}=0.1$ and $\varepsilon=0.1$.
   Choose $\tau_0=1.95$ and a starting point $(y_I^0,Z^0,y_E^0,S^0,X^0)=
  (\widetilde{y}_I^0,\widetilde{Z}^0,\widetilde{y}_E^0,\widetilde{S}^0,X^0)
  \!\in\mathbb{R}_{+}^{m_I}\times\mathcal{K}^*\times\mathbb{R}^{m_E}\times\mathcal{S}_{+}^n\times\mathcal{S}_{+}^n$.
  Let $\mathcal{T}=\!\lambda_{\rm max}(\mathcal{A}_I\mathcal{A}_I^*)\mathcal{I}\!-\!\mathcal{A}_I\mathcal{A}_I^*$.
   For $k=0,1,\ldots$, perform the $k$th iteration as follows.
    \begin{description}
     \item[\bf Step 1.] Compute the following minimization problems
                        \begin{subnumcases}{}
                        y_I^{k+1}=\mathop{\arg\min}_{y_I\in\mathbb{R}_{+}^{m_I}}\ L_{\sigma}(y_I,\widetilde{Z}^k,\widetilde{y}_E^k,\widetilde{S}^k;X^k)
                                +\frac{\sigma}{2}\|y_I-\widetilde{y}_I^k\|_{\mathcal{T}}^2,\\
                        Z^{k+1}=\mathop{\arg\min}_{Z\in\mathcal{K}^*}\ L_{\sigma}(y_I^{k+1},Z,\widetilde{y}_E^k,\widetilde{S}^k;X^k),\\
                        y_E^{k+1}=\mathop{\arg\min}_{y_E\in\mathbb{R}^{m_E}}\ L_{\sigma}(y_I^{k+1},Z^{k+1},y_E,\widetilde{S}^k;X^k),\\
                        S^{k+1}=\mathop{\arg\min}_{S\in\mathcal{S}_{+}^n}\ L_{\sigma}(y_I^{k+1},Z^{k+1},y_E^{k+1},S;X^k).
                        \end{subnumcases}

     \item[\bf Step 2.] Let $X^{k+1}=X^{k}+\tau_{k}\sigma\big(\mathcal{A}_I^*y_I^{k+1}\!+\!Z^{k+1}
                                \!+\!\mathcal{A}_E^*y_E^{k+1}\!+\!S^{k+1}\!-\!C\big)$ where
                         \begin{equation*}
                            \tau_k:=\left\{\begin{array}{cl}\!
                                    \min(1+\delta_k,\tau_{k-1}) & {\rm if}\ 1+\delta_{k}>\overline{\tau}\\
                                     \overline{\tau}&{\rm otherwise}
                                     \end{array}\right.\ \ {\rm for}\ k\ge 1
                         \end{equation*}
                        \quad\ with
                         \begin{equation*}
                          \delta_{k}=\frac{\big\|\mathcal{A}_I^*y_I^{k+1}\!+\!\widetilde{Z}^{k}
                                \!+\!\mathcal{A}_E^*\widetilde{y}_E^{k}\!+\!\widetilde{S}^{k}\!-\!C\big\|^2\!
                                             -\varepsilon\big\|S^{k+1}\!-\!S^k\big\|^2}
                                              {\|\mathcal{A}_I^*y_I^{k+1}\!+\!Z^{k+1}
                                \!+\!\mathcal{A}_E^*y_E^{k+1}\!+\!S^{k+1}\!-\!C\|^2}-\varepsilon.
                         \end{equation*}

     \item[\bf Step 3.] Let $\widetilde{S}^{k+1}=S^{k+1},\widetilde{y}_I^{k+1} = y_I^{k+1},$ and
                 \begin{equation}\label{corr-step}
                  \left\{\begin{array}{l}
                  \widetilde{y}_E^{k+1}=\widetilde{y}_E^{k} + \alpha(y_E^{k+1}\!-\widetilde{y}_E^{k})
                                           -\big(\mathcal{A}_E\mathcal{A}_E^*\big)^{-1}(S^{k+1}-S^k),\\
                 \widetilde{Z}^{k+1} = \widetilde{Z}^{k}\! +\!\alpha(Z^{k+1}\!-\widetilde{Z}^{k}\!)
                                          -(S^{k+1}-S^k)-\mathcal{A}_{E}^*(\widetilde{y}_E^{k+1}\!-\!\widetilde{y}_E^{k}).
                  \end{array}\right.
                 \end{equation}
    \end{description}
   }
   }

%---------------------------------------------------------------------------------------------------Subsection 4.1
  \subsection{Doubly nonnegative SDP problem sets}\label{subsec4.1}

  In our numerical experiments, we test the following five classes of doubly nonnegative
  SDP (DNN-SDP) problems, which can also be found in the literature \cite{WGY10,Toh04,STY14}.

  \medskip

  {\bf (i) SDP relaxation of BIQ problems}. It has been shown in \cite{Bur09} that
  under some mild assumptions, the following binary integer quadratic programming (BIQ) problem
  \[
     \min\Big\{\frac{1}{2}x^{\mathbb{T}}Qx +\langle c,x\rangle\ |\ x\in\{0,1\}^{n}\Big\}
  \]
  is equivalent to the completely positive programming (CPP) problem given by
  \[
    \min\Big\{\frac{1}{2}\langle Q,Y\rangle +\langle c,x\rangle\ |\ {\rm diag}(Y)-x=0,\
      X=[Y\ x;x^{\mathbb{T}}\ 1]\in\mathcal{C}_{n+1}\Big\},
  \]
  where $\mathcal{C}_{n+1}:=\{0\}\cup\big\{X\in\mathbb{S}^{n+1}\ |\
  X=\sum_{k\in K}z^k(z^k)^{\mathbb{T}}\ {\rm for\ some}\ \{z^k\}_{k\in K}\subset\mathbb{R}_{+}^{n+1}\backslash\{0\}\big\}$
  is the $(n+1)$-dimensional completely positive cone.
  It is well known that the CPP problem is intractable although $\mathcal{C}_{n+1}$ is convex.
  To solve the CPP problem, one would typically relax $\mathcal{C}_{n+1}$ to
  $\mathcal{S}_{+}^{n+1}\cap\mathcal{K}$, and obtain the following SDP relaxation problem
  \begin{align}\label{BIQ-prob}
   &\min\ \frac{1}{2}\langle Q,Y\rangle +\langle c,x\rangle\nonumber\\
   &\ {\rm s.t.}\ \ {\rm diag}(Y)-x=0,\ \ \alpha=1,\\
   &\qquad\  X=\left[\begin{matrix}
                   Y & x\\ x^{\mathbb{T}} & \alpha
                \end{matrix}\right]\in\mathcal{S}_{+}^{n+1},\ \ X\in\mathcal{K}\nonumber
  \end{align}
  where $\mathcal{K}=\big\{X\in\mathbb{S}^{n+1}\ |\ X\ge 0\big\}$ is the polyhedral cone.
  In our numerical experiments, the test data for the matrix $Q$ and the vector $c$
  are taken from Biq Mac Library maintained by Wiegele, which is available at
  http://biqmac.uni-klu.ac.at/biqmaclib.html

  \medskip

  {\bf (ii) $\theta_{+}$ problems}. This class of DNN-SDP problems arises from the relaxation of maximum
  stable set problems. Given a graph $G$ with edge set $E$, the SDP relaxation of the maximum stable
  set problem for the graph $G$ is given by
  \[
    \theta_{+}(G):=\max\left\{\langle ee^{\mathbb{T}},X\rangle\ |\ \langle \Xi_{ij},X\rangle=0\ \ (i,j)\in E,\
    \langle I,X\rangle =1,\ X\in\mathcal{S}_{+}^n,\ X\in\mathcal{K}\right\},
  \]
  where $e$ is the vector of ones with dimension known from the context, $\Xi_{ij}=e_ie_j^{\mathbb{T}}+e_je_i^{\mathbb{T}}$
  with $e_i$ denoting the $i$th column of the $n\times n$ identity matrix,
  and $\mathcal{K}=\big\{X\in\mathbb{S}^n\ |\ X\ge 0\big\}$.
  In our numerical experiments, we test the graph instances $G$ considered in \cite{Sloane05,TCCJ92,Toh04}.

   \medskip

  {\bf (iii) SDP relaxation of QAP problems}. Let $\mathbb{P}^n$ be the set of $n\times n$ permutation matrices.
  Given matrices $A,B\in\mathbb{S}^n$, the quadratic assignment problem is defined as
   \[
    \overline{v}_{\rm QAP}:=\min\Big\{\langle X,AXB\rangle\ |\ X\in\mathbb{P}^n\Big\}.
  \]
  We identify a matrix $X=[x_1\ x_2\ \ldots\ x_n]\in\mathbb{R}^{n\times n}$ with the $n^2$-vector $x=[x_1;\ldots,;x_n]$,
  and let $Y^{ij}$ be the $n\times n$ block corresponding to $x_ix_j^{\mathbb{T}}$ in the $n^2\times n^2$
  matrix $xx^{\mathbb{T}}$. It has been shown in \cite{PR09} that $\overline{v}_{\rm QAP}$ is bounded below
  by the number yielded by
  \begin{align}\label{QAP}
   v:=&\min\ \langle B\otimes A,Y\rangle\nonumber\\
   &\ {\rm s.t.}\ \ {\textstyle\sum_{i=1}^n}Y^{ii}=I,\nonumber\\
   &\qquad\ \langle I,Y^{ij}\rangle=\delta_{ij}\quad\forall 1\le i\le j\le n,\\
   &\qquad\  \langle \Gamma,Y^{ij}\rangle=1\quad \forall 1\le i\le j\le n,\nonumber\\
   &\qquad\  Y\in\mathcal{S}_{+}^{n^2},\ \ Y\in\mathcal{K},\nonumber
  \end{align}
  where $\Gamma$ is the matrix of ones,
  $\delta_{ij}=\!\left\{\begin{array}{ll}
                 1 & {\rm if}\ i=j\\
                 0 & {\rm if}\ i\ne j
                 \end{array}\right.$
  and $\mathcal{K}=\!\big\{X\in\mathbb{S}^{n^2}\ |\ X\ge 0\big\}$.
  In our numerical experiments, the test instances $(A,B)$ are taken from the QAP Library \cite{Hahn}.

   \medskip

  {\bf (iv) RCP problems}. This class of DNN-SDP problems arises from the SDP relaxation of clustering
  problems described in \cite[Eq(13)]{PW07} and takes the following form
  \begin{equation}\label{RCP}
   \min\Big\{\big\langle W,I\!-\!X\big\rangle\ |\ Xe=e,\ \langle I,X\rangle=\kappa,\ X\in\mathcal{S}_{+}^n,\ X\in \mathcal{K}\Big\},
  \end{equation}
  where $W$ is the so-called affinity matrix whose entries represent the similarities of the objects
  in the dataset, $e$ is the vector of ones, $\kappa$ is the number of clusters, and
  $\mathcal{K}$ is the cone $\big\{X\in\mathbb{S}^n\ |\ X\ge 0\big\}$.
  All the data sets we test are from the UCI Machine Learning Repository (available at http://archive.ics.uci.edu/ml/datasets.html).
  For some large size data sets, we only select the first $n$ rows. For example,
  the original data set ``spambase'' has $4061$ rows and we select the first $1500$ rows to obtain
  the test problem ``spambase-large.2'' for which the number ``2'' means that there are $\kappa=2$ clusters.

  \medskip

  {\bf (v) SDP relaxation of FAP problems}. Let $G=(V,E)$ be an undirected graph with vertex set
  $V$ and edge set $E\in V\times V$, and $W$ be a weight matrix for $G$ such that $W_{ij}=W_{ji}$ is
  the weight associated with $(i,j)\in E$. For those edges $(i,j)\not\in E$,
  we assume $W_{ij}=W_{ji}=0$. Let $U\subseteq E$ be a given edge subset. 
  This class of problems has the form
  \begin{align}\label{FAP}
   &\max \Big\langle \frac{\kappa-1}{2\kappa}L(G,W)-\frac{1}{2}{\rm Diag}(We),X\Big\rangle\nonumber\\
   &\ {\rm s.t.}\ \ {\rm diag}(X)=e,\ \ X\in\mathcal{S}_{+}^n,\\
   &\qquad\  \langle -\Xi_{ij},X\rangle={2}/{(\kappa-1)}\quad \forall(i,j)\in U\subseteq E,\nonumber\\
   &\qquad\  \langle -\Xi_{ij},X\rangle\le {2}/{(\kappa-1)}\quad\forall(i,j)\in E\backslash U,\nonumber
  \end{align}
  where $\kappa>1$ is an integer, $L(G,W):={\rm Diag}(We)-W$ is the Laplacian matrix,
  $\Xi_{ij}$ and $e$ are same as above. Let $M_{ij}=-\frac{1}{\kappa-1}$ if $(i,j)\in E$ 
  and otherwise $M_{ij}=0$. Then (\ref{FAP}) is also equivalent to
  \begin{align}\label{EFAP}
  &\max\ \Big\langle \frac{\kappa-1}{2\kappa}L(G,W)-\frac{1}{2}{\rm Diag}(We),X\Big\rangle\nonumber\\
   &\ {\rm s.t.}\ \ {\rm diag}(X)=e,\ \ X\in\mathcal{S}_{+}^n,\ X-M\in\mathcal{K},
  \end{align}
  where
   \(
    \mathcal{K}=\big\{X\in\mathbb{S}^n\ |\ X_{ij}=0\ \ \forall(i,j)\in U,\ X_{ij}\ge 0\ \ \forall(i,j)\in E\backslash U\big\}
  \)
  (see \cite{STY14}).

  \medskip

  The above five classes of DNN-SDP problems come from the SDP relaxation for
  some difficult combinatorial optimization problems. For these problems, one usually adds some additional
  valid inequalities so as to obtain tighter bound for the original combinatorial optimization problems.
  For example, to obtain a tighter bound for the BIQ problems, one can add four classes of
  valid inequalities to \eqref{BIQ-prob} and get the following problems:
  \begin{align}\label{EBIQ}
     &\min\ \frac{1}{2}\langle Q,Y\rangle +\langle c,x\rangle\nonumber\\
     &\ {\rm s.t.}\ \ {\rm diag}(Y)-x=0,\ \ \alpha=1,\nonumber\\
     &\qquad\  X=\left[\begin{matrix}
                     Y & x\\ x^{\mathbb{T}} & \alpha
                  \end{matrix}\right]\in\mathcal{S}_{+}^{n+1},\ \ X\in\mathcal{K},\nonumber\\
     &\qquad\ -Y_{ij}\!+\!x_i\ge 0,\ -Y_{ij}\!+\!x_j\ge 0,\ Y_{ij}\!-\!x_i\!-\!x_j\ge -1,\ \forall i<j,\ j=2,\ldots,n\!-\!1,\nonumber\\
     &\qquad\quad\ Y_{ij}+Y_{ik}+Y_{jk}-x_i-x_j-x_k \ge -1,\ \ j\ne i,\ k\ne i,\ k\ne j
  \end{align}
  where $\mathcal{K}=\big\{X\in\mathbb{S}^{n+1}\ |\ X\ge 0\big\}$, and the set of the first three 
  inequalities are obtained from the valid inequalities
  $x_i(1-x_j)\ge 0, x_j(1-x_i)\ge 0,(1-x_i)(1-x_j)\ge 0$ when $x_i,x_j$ are binary variables.
  In the sequel, we call \eqref{EBIQ} the extended BIQ problem.

%-----------------------------------------------------------------------------------------------------Subsection 4.2
  \subsection{Numerical results for DNN-SDPs without $\mathcal{A}_IX\ge b_I$ constraints}

  In this subsection, we apply CADMM for solving the doubly nonnegative SDP problems without
  inequality constraints $\mathcal{A}_IX\ge b_I$ described in last subsection,
  and compare its performance with that of the $3$-block ADMM with step-size $\tau=1.618$
  and the $3$-block ADMM with Guassian back substitution proposed in \cite{HTY12}.
  We call the last two methods ADMM3d and ADMM3g, respectively. We have implemented CADMM,
  ADMM3d and ADMM3g in MATLAB, where the correction step-size of ADMM3g was set to
  be $0.999$ instead of $1$ as in \cite{HTY12} for the convergence guarantee.
  Among others, the solution order of subproblems involved in the $3$-block ADMM
  and the prediction step of ADMM3g is same as that of subproblems in (S.1) of CADMM.
  Extensive numerical tests show that this order is also the best for ADMM3d and ADMM3g.
  Notice that ADMM3d here is different from the ADMM developed by Wen et al. \cite{WGY10}
  since the latter uses the solution order of $y\rightarrow Z\rightarrow S$.
  The computational results for all the DNN-SDP problems
  are obtained on a Windows system with Intel(R) Core(TM) i3-2120 CPU@3.30GHz.

  \medskip

  We measure the accuracy of an approximate optimal solution $(X,y_E,S,Z)$ for \eqref{PDNN-SDP}
  and \eqref{DDNN-SDP} by using the relative residual
  \(
    \eta=\max\big\{\eta_{P},\eta_{D},\eta_{\mathcal{S}},\eta_{\mathcal{K}},\eta_{\mathcal{S}^*},\eta_{\mathcal{K}^*},\eta_{C_1},\eta_{C_2}\big\}
  \)
%  \begin{equation}\label{eta}
%    \eta=\max\Big\{\eta_{P},\eta_{D},\eta_{\mathcal{S}},\eta_{\mathcal{K}},\eta_{\mathcal{S}^*},\eta_{\mathcal{K}^*},\eta_{C_1},\eta_{C_2}\Big\},
%  \end{equation}
  where
  \begin{align*}
   \eta_{P}\!=\!\frac{\|\mathcal{A}_EX\!-\!b_E\|}{1+\|b_E\|},\ \eta_D\!=\!\frac{\|\mathcal{A}_E^*y_E\!+\!S\!+\!Z\!-\!C\|}{1+\|C\|},\
   \eta_{\mathcal{S}}\!=\!\frac{\|\Pi_{\mathcal{S}_{+}^n}(-X)\|}{1+\|X\|},\ \eta_{\mathcal{K}}=\frac{\|\Pi_{\mathcal{K}^*}(-X)\|}{1+\|X\|},\\
   \eta_{\mathcal{S}^*}\!=\!\frac{\|\Pi_{\mathcal{S}_{+}^n}(-S)\|}{1+\|S\|},\
   \eta_{\mathcal{K}^*}\!=\!\frac{\|\Pi_{\mathcal{K}^*}(-Z)\|}{1+\|Z\|},\
   \eta_{C_1}\!=\!\frac{\langle X,S\rangle}{1+\|X\|+\|S\|},\
   \eta_{C_2}\!=\!\frac{\langle X,Z\rangle}{1+\|X\|+\|Z\|}.
  \end{align*}
  In addition, we also compute the relative gap by
  \(
    \eta_g = \frac{\langle C,X\rangle-\langle b_E,y_E\rangle}{1+|\langle C,X\rangle + |\langle b_E,y_E\rangle|}.
  \)
  We terminated the solvers CADMM, ADMM3g and ADMM3d whenever $\eta<10^{-6}$ or
  the number of iteration is over the maximum number of iterations $k_{\rm max}=20000$.

  \medskip

  In the implementation of all the solvers, the penalty parameter $\sigma$ is dynamically adjusted
  according to the progress of the algorithms. The exact details on the adjustment strategies are
  too tedious to be presented here but it suffices to mention that the key idea to adjust $\sigma$
  is to balance the progress of primal feasibilities $(\eta_P,\eta_{\mathcal{S}},\eta_{\mathcal{K}})$
  and dual feasibilities $(\eta_D,\eta_{\mathcal{S}^*},\eta_{\mathcal{K}^*})$. In addition,
  all the solvers also adopt some kind of restart strategies to ameliorate slow convergence.
  During the numerical tests, we use the same adjustment strategy of $\sigma$ and restart strategy
  for all the solvers.

   \medskip

   Table \ref{tab1} reports the number of problems that are successfully solved to the accuracy
   of $10^{-6}$ in $\eta$ by each of the three solvers within the maximum number of iterations.
   We see that CADMM and ADMM3d solved successfully all instances from ${\bf BIQ}$, ${\bf RCP}$,
   ${\bf FAP}$ and $\theta_{+}$, and for ${\bf QAP}$ problems CADMM and ADMM3d solved successfully
   ${\bf 39}$ and ${\bf 35}$, respectively; while ADMM3g solved successfully all instances from ${\bf RCP}$
   and ${\bf FAP}$, but failed to ${\bf 1}$ tested problem from ${\bf BIQ}$, ${\bf 5}$ tested problems
   from $\theta_{+}$ and ${\bf 58}$ tested problems from ${\bf QAP}$.
   That is, CADMM solved the most number of instances to the required accuracy,
   with ADMM3d in the second place, followed by ADMM3g.

  \begin{table}[h]
  \center
  \caption{Numbers of problems that are solved to the accuracy of $10^{-6}$ in $\eta$}
  \begin{tabular}{|c|c|c|c|}
   \hline
   \diagbox{\bf Problem\ set}{\bf Solvers}& ${\bf CADMM}$ & ${\bf ADMM3d}$ & ${\bf ADMM3g}$ \\
   \hline
    ${\bf BIQ}(165)$ & $165$  & $165$   & $164$ \\
   \hline
   $\theta_{+}(113)$ & $113$  & 113 & 108 \\
   \hline
   ${\bf QAP}(95)$ & $39$  & 35  & 37\\
   \hline
   ${\bf RCP}(120)$ & $120$  & $120$  & 120 \\
   \hline
   ${\bf FAP}(13)$ & $13$  & $13$ & 13 \\
   \hline
   ${\bf Total}(506)$ & $450$  & 446 & 443 \\
   \hline
  \end{tabular}
  \label{tab1}
  \end{table}

  \medskip

  Table \ref{tab2} reports the detailed numerical results of CADMM, ADMM3d and ADMM3g in solving
  all test instances. From this table, one can learn that CADMM requires the fewest iterations
  for about $69\%$ test problems though the computing time is comparable even a little
  more than that of the ADMM3d due to some additional computation cost in the correction step,
  while ADMM3g requires the most iterations for most of problems and
  at least $1.5$ times as many iterations as CADMM does for about $30\%$ test problems.

  \medskip

  Figure \ref{BIQ-fig} (respectively, Figure \ref{RCP-fig}) shows the performance profiles of CADMM,
  ADMM3d and ADMM3g in terms of number of iterations and computing time, respectively, for the total
  $165$ ${\bf BIQ}$ (respectively, $120$ ${\bf RCP}$) tested problems.
  We recall that a point $(x,y)$ is in the performance profiles curve of a method
  if and only if it can solve $(100y)\%$ of all tested problems no slower than $x$ times of
  any other methods. It can be seen that CADMM requires the least number of iterations for
  at least $90\%$ ${\bf BIQ}$ tested problems and $75\%$ ${\bf RCP}$ tested problems,
  and its computing time is at most $1.3$ times as many as that of the fastest solver
  for $90\%$ instances; while ADMM3g requires the most number of iterations for almost
  all test instances, and for about $20\%$ BIQ tested problems,
  its number of iterations is at least twice as many as that of the best solver.

  \begin{figure}[h]
 \begin{minipage}{0.5\linewidth}
 \centering
 \includegraphics[width=3.1in]{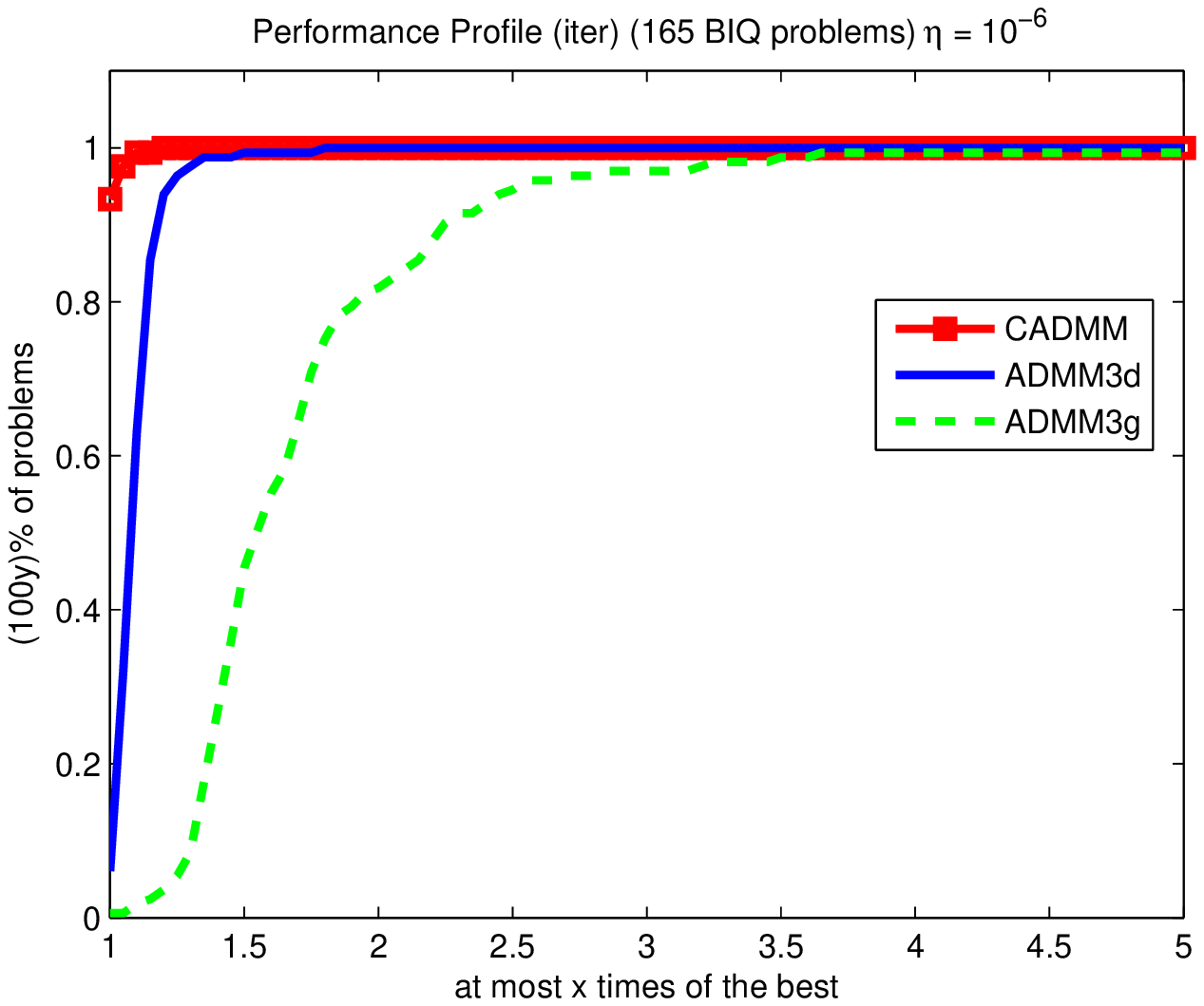}
% \caption{\small Performance profiles (iter)}
% \label{BIQ-iter}
 \end{minipage}%
 \begin{minipage}{0.5\linewidth}
 \centering
 \includegraphics[width=3.1in]{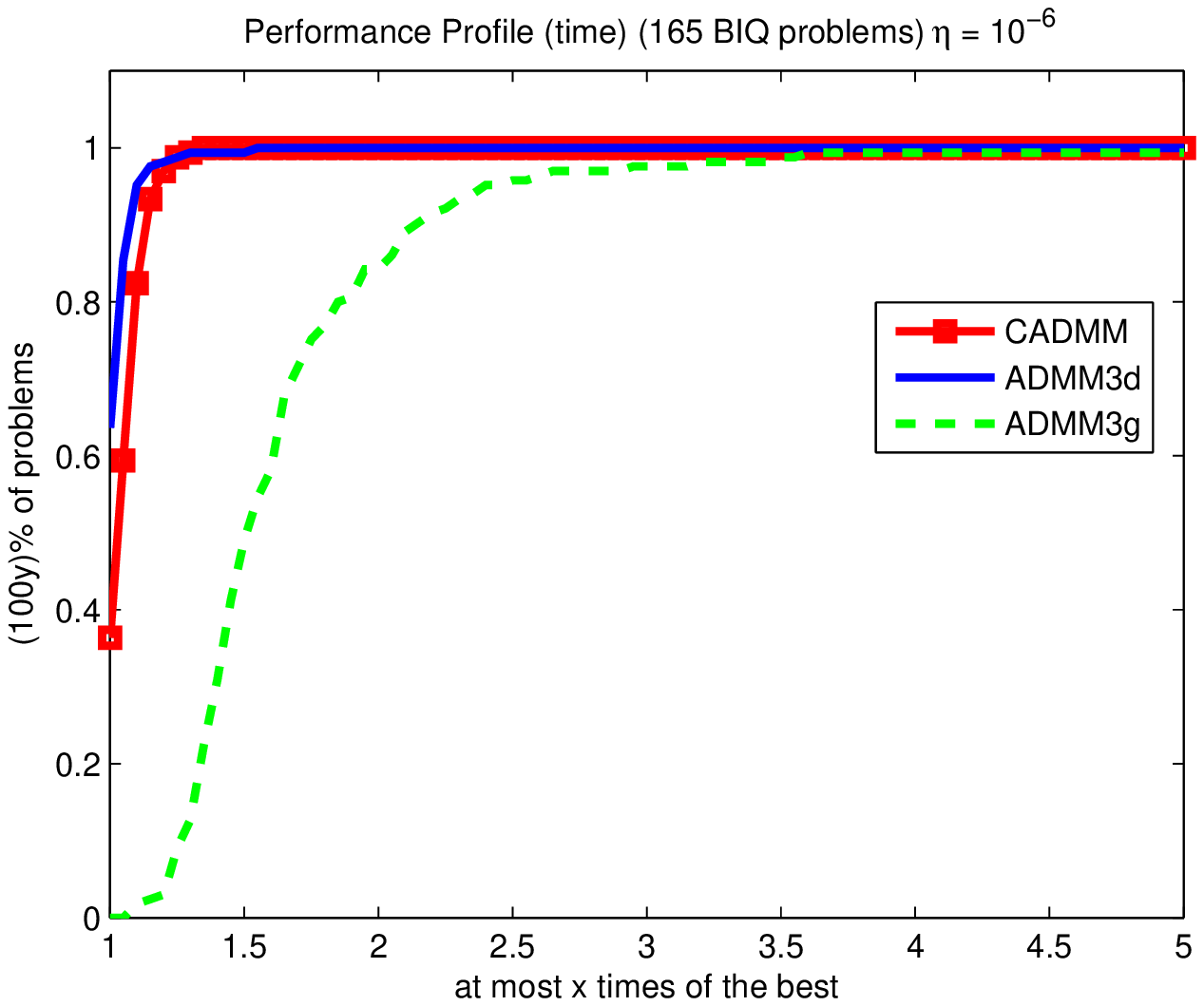}
 \end{minipage}
  \caption{\small Performance profiles of the number of iterations and computing time for BIQ}
 \label{BIQ-fig}
 \end{figure}
 \begin{figure}[h]
 \begin{minipage}{0.5\linewidth}
 \centering
 \includegraphics[width=3.1in]{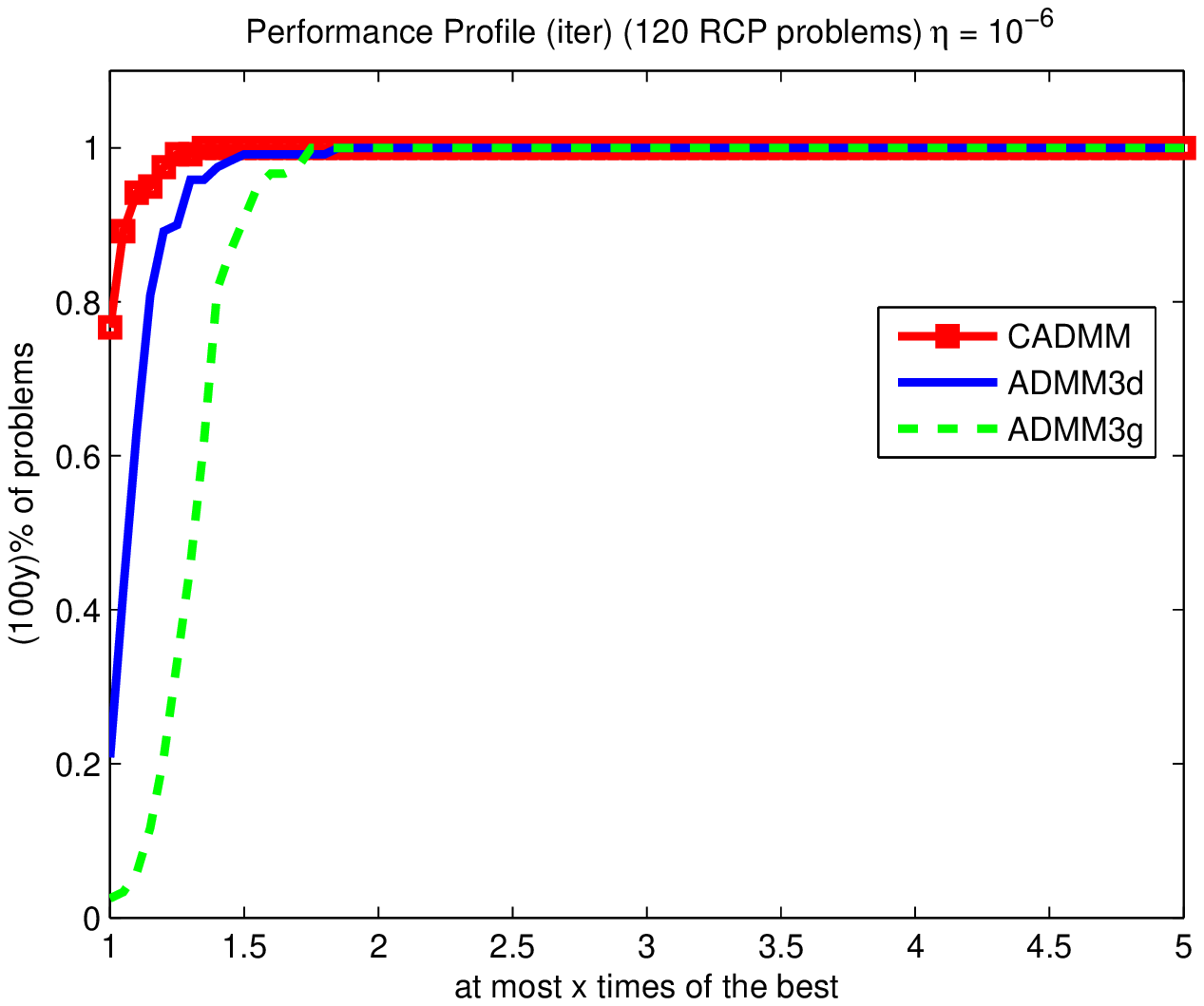}
 \end{minipage}%
 \begin{minipage}{0.5\linewidth}
 \centering
 \includegraphics[width=3.1in]{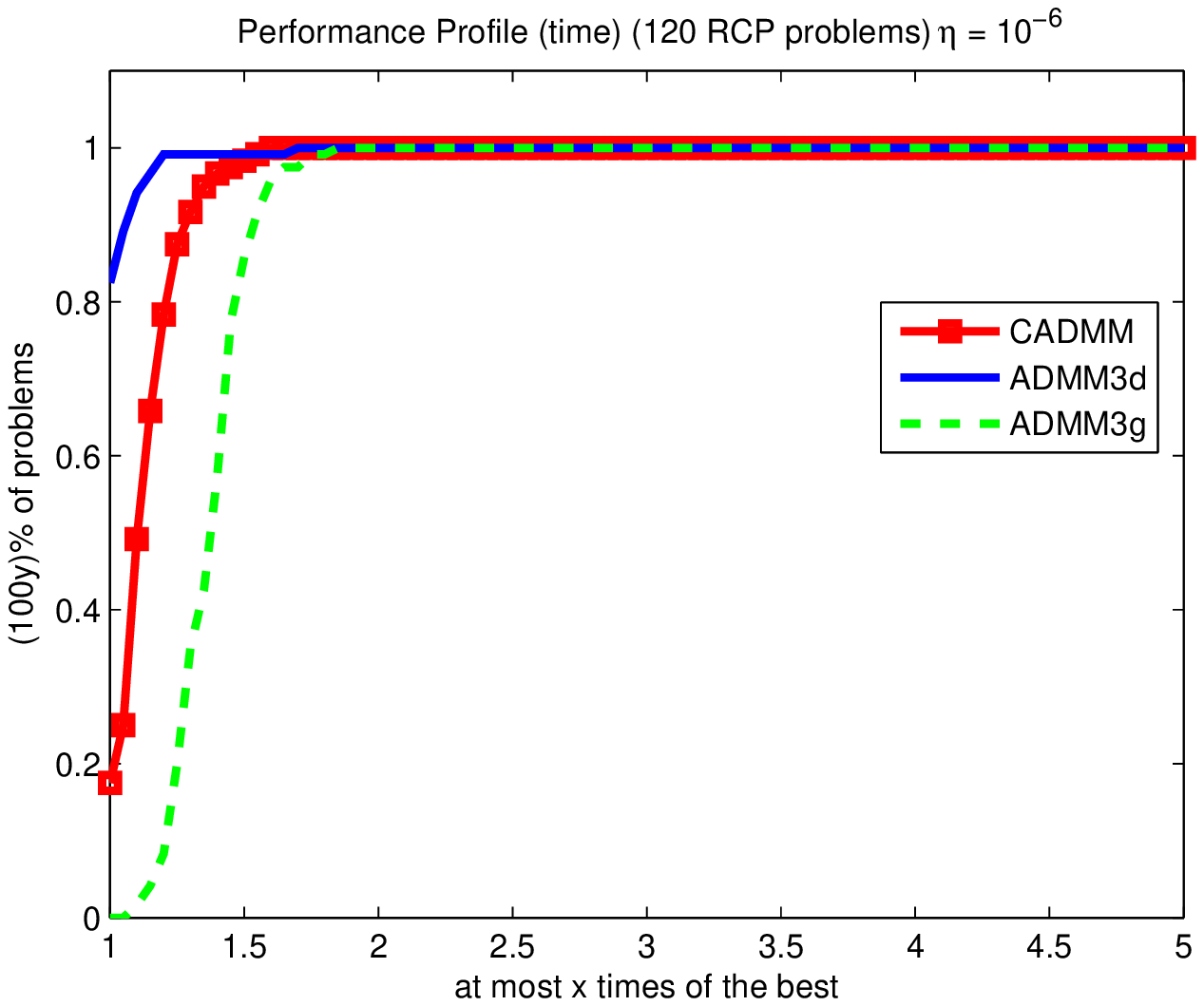}
 \end{minipage}
 \caption{\small Performance profiles of the number of iterations and computing time for RCP}
 \label{RCP-fig}
\end{figure}
\begin{figure}[h]
 \begin{minipage}{0.5\linewidth}
 \centering
 \includegraphics[width=3.1in]{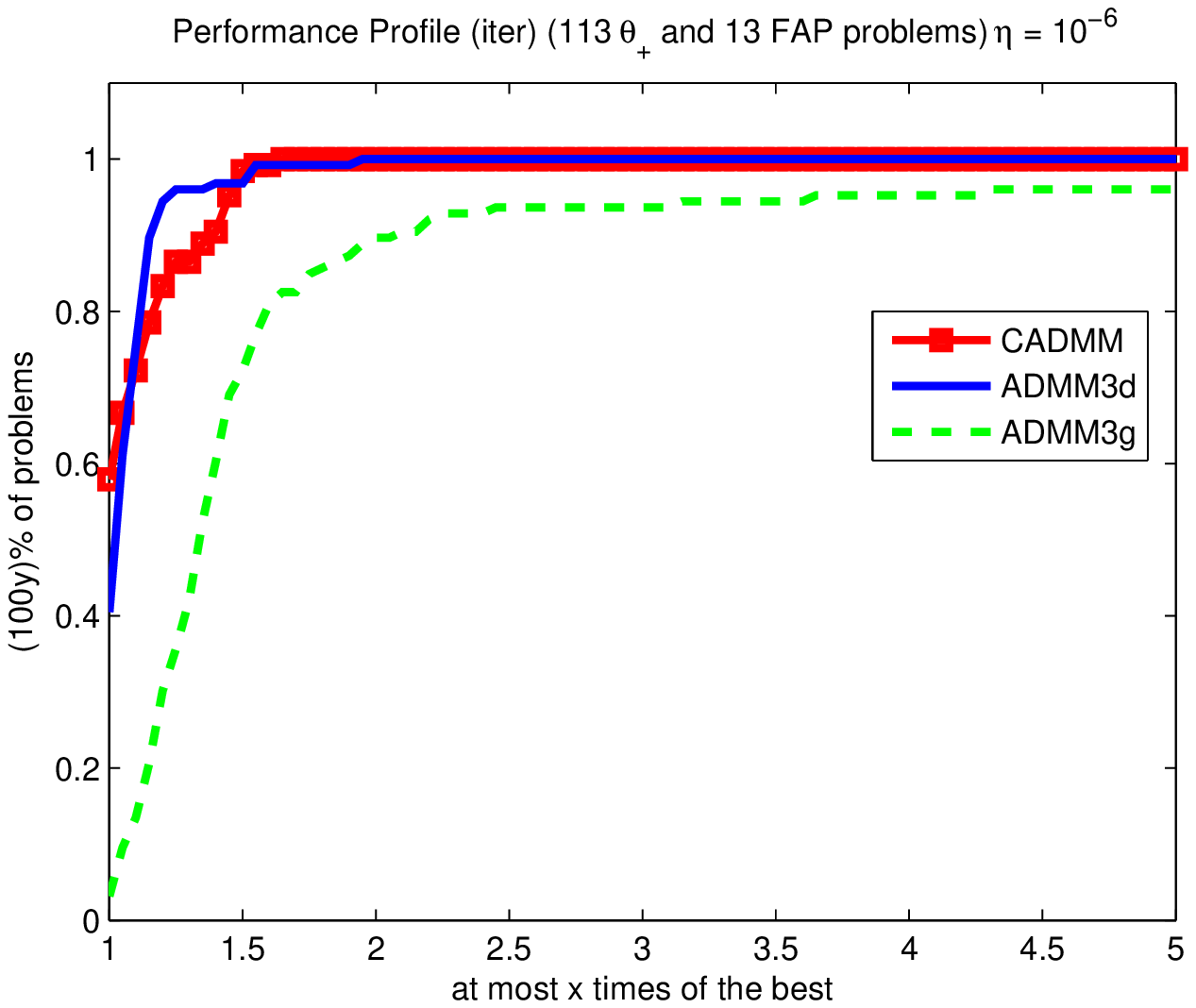}
 \end{minipage}%
 \begin{minipage}{0.5\linewidth}
 \centering
 \includegraphics[width=3.1in]{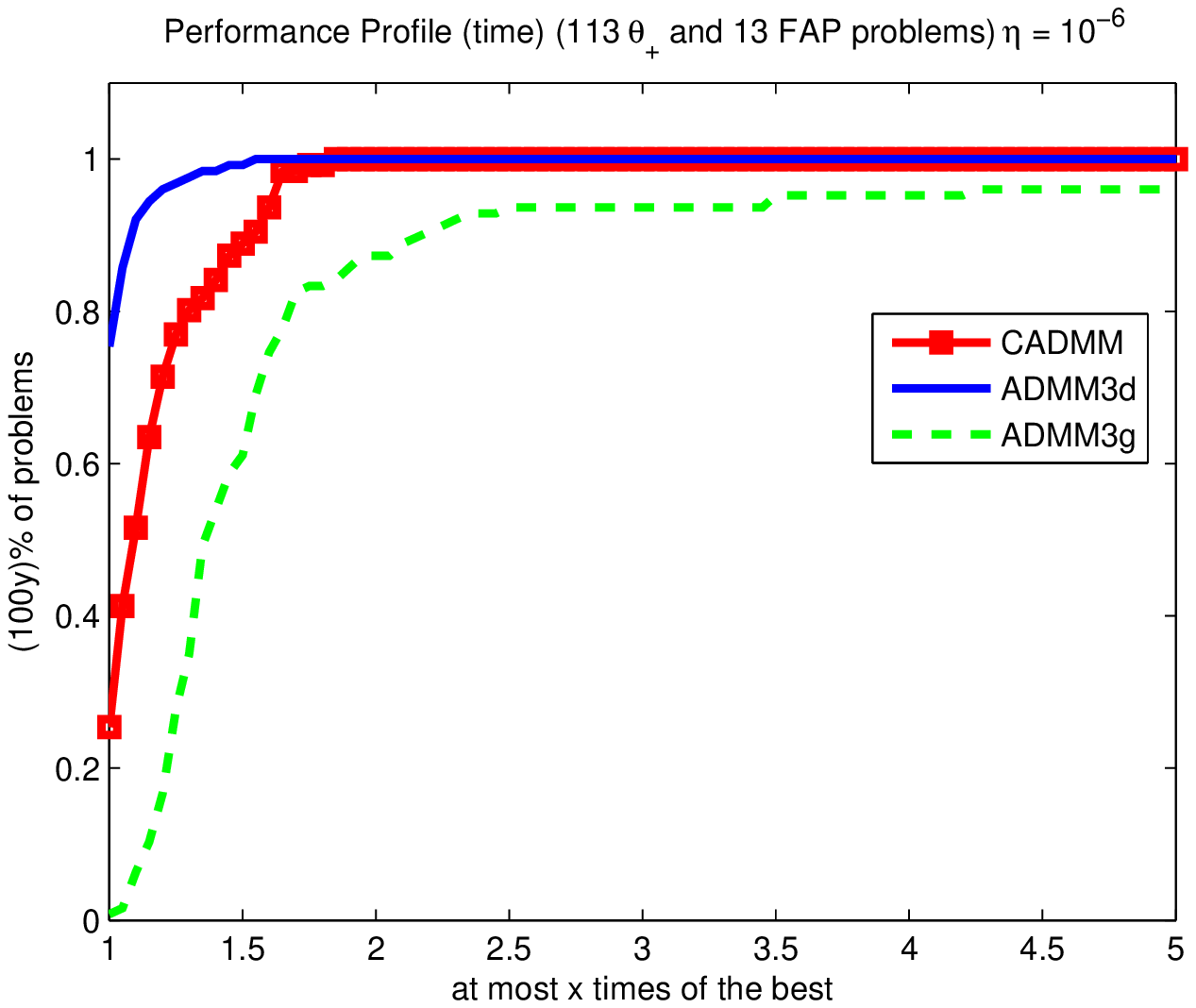}
 \end{minipage}
 \caption{\small Performance profiles of the number of iterations and computing time for $\theta_{+}$ and FAP}
 \label{theta-fig}
\end{figure}
 \begin{figure}[htbp]
 \begin{minipage}{0.5\linewidth}
 \centering
 \includegraphics[width=3.1in]{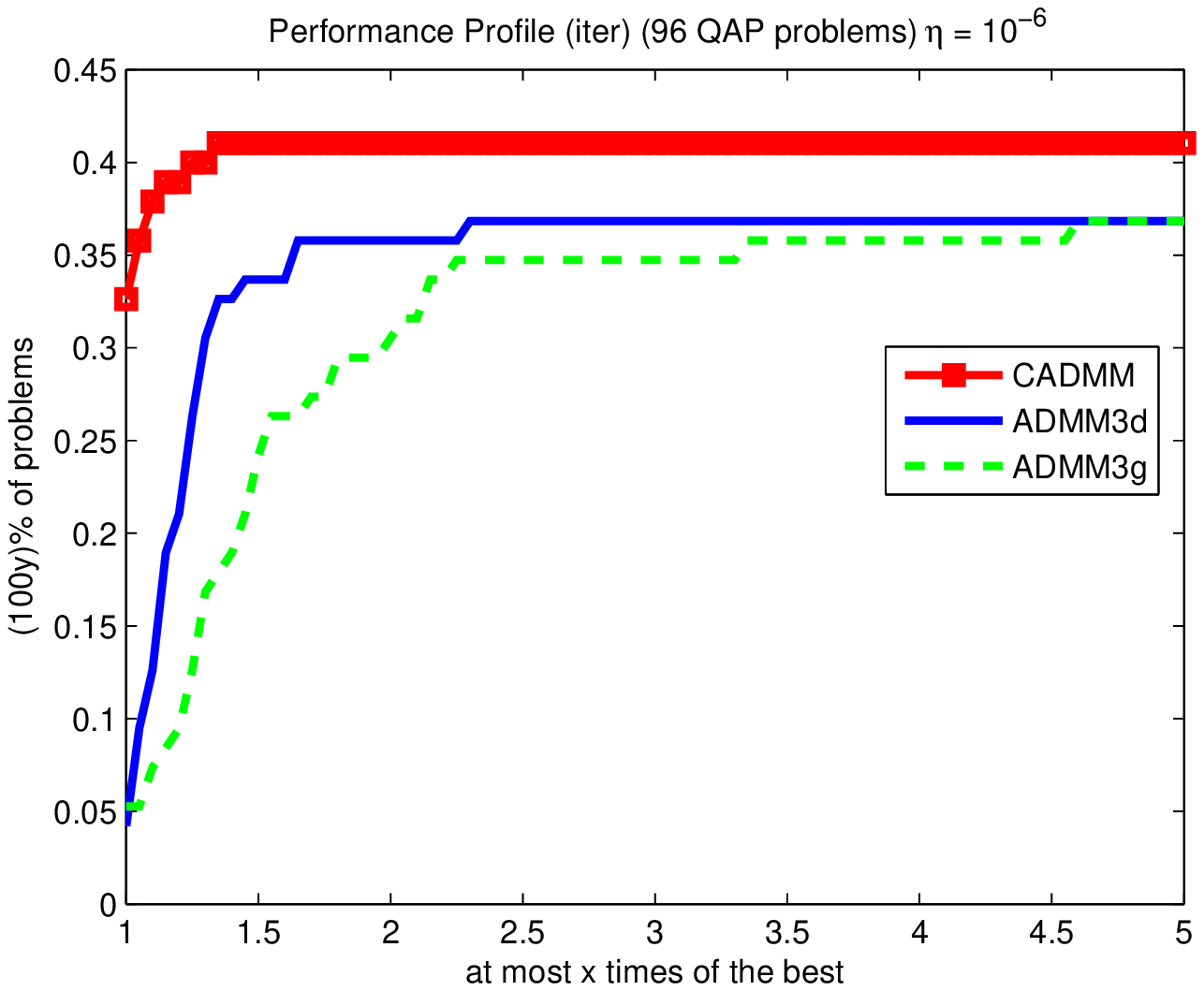}
 \end{minipage}%
 \begin{minipage}{0.5\linewidth}
 \centering
 \includegraphics[width=3.1in]{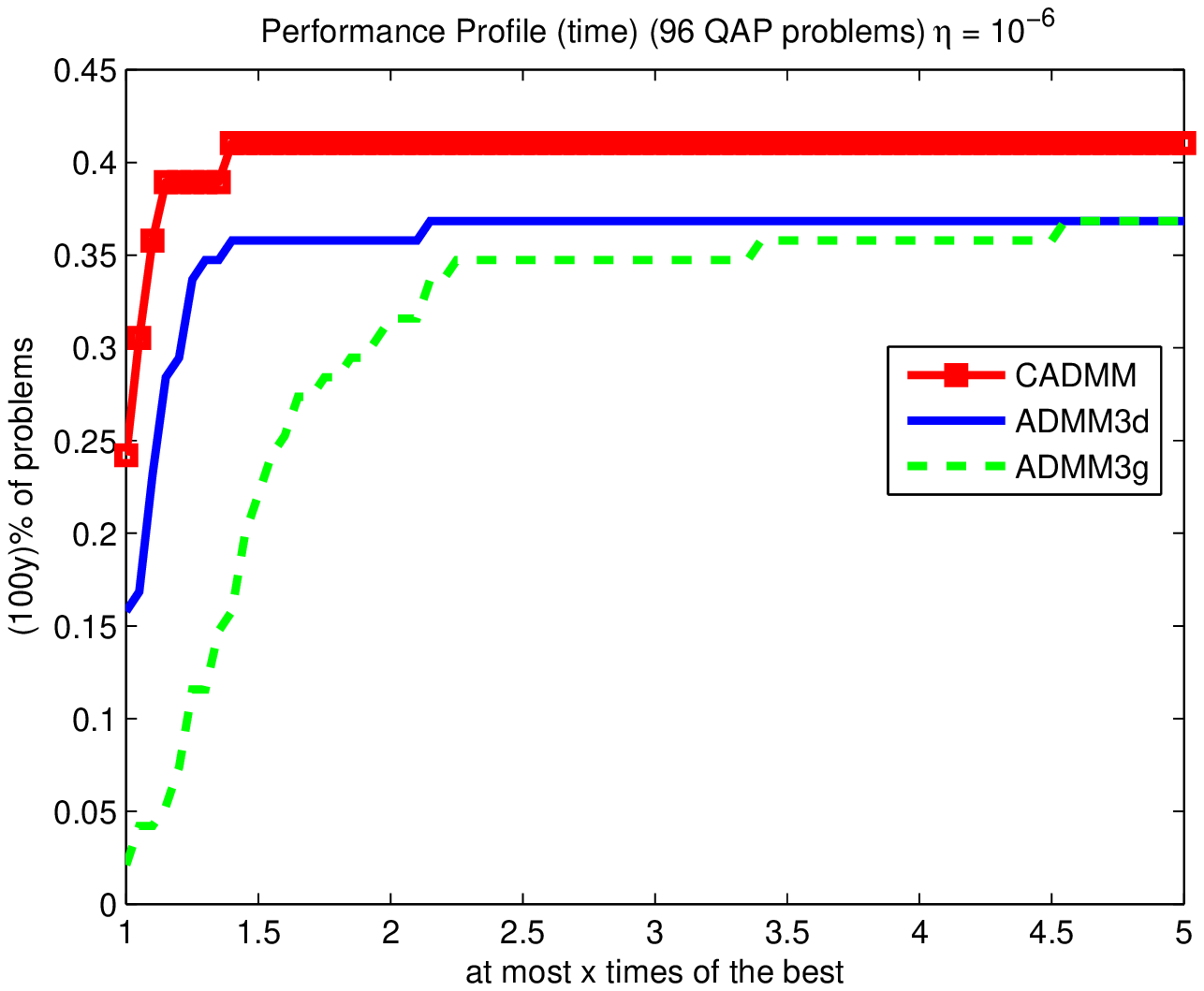}
 \end{minipage}
 \caption{\small Performance profiles of the number of iterations and computing time for QAP}
 \label{QAP-fig}
\end{figure}

 \medskip

  Figure \ref{theta-fig} (respectively, Figure \ref{QAP-fig}) shows the performance profiles of CADMM, ADMM3d and ADMM3g
  in terms of number of iterations and computing time, respectively, for the total $113$ $\theta_{+}$
  and $13$ ${\bf FAP}$ (respectively, $95$ ${\bf QAP}$) tested problems.
  One can see from Figure \ref{theta-fig} that CADMM requires the comparable iterations as ADMM3d does
  for the $\theta_{+}$ and ${\bf FAP}$ tested problems, for which the former requires the least number of
  iterations for about $60\%$ problems and the latter requires the least number of iterations for
  at most $40\%$ problems, while ADMM3g requires at least $1.5$ times as many iterations as
  the best solvers for about $30\%$ problems. Figure \ref{QAP-fig} indicates that
  for the ${\bf QAP}$ tested problems, which are the most difficult among the five classes,
  CADMM has remarkable superiority to ADMM3d and ADMM3g in terms of iterations and computing time,
  and CADMM requires the least number of iterations for more than $30\%$ problems,
  while ADMM3d and  ADMM3g need the least number of iterations only for $5\%$ problems.

%-----------------------------------------------------------------------------------------------------Subsection 4.2
  \subsection{Numerical results for DNN-SDPs with $\mathcal{A}_IX\ge b_I$ constraints}

  We apply CADMM for solving the extended BIQ problems described in \eqref{EBIQ},
  and compare its performance with the linearized ADMMG in \cite{HY13} (we call the method LADMM4g and
  use the parameter $\alpha=0.999$ in the Gaussian back substitution step). Notice that one may apply
  the directly extended ADMM with $4$ blocks (although without convergent guarantee) for solving \eqref{EBIQ}
  by adding a proximal term $\frac{\sigma}{2}\|y_I-y_I^k\|_{\mathcal{T}}^2$ for the $y_I$ part,
  where $\mathcal{T}=\|\mathcal{A}_I\mathcal{A}_I^*\|\mathcal{I}\!-\!\mathcal{A}_I\mathcal{A}_I^*$.
  We call this method ADMM4d, and compare the performance of CADMM with that of ADMM4d with $\tau=1.618$.
  The computational results for all the extended BIQ problems are obtained on
  the same desktop computer as before.

  \medskip

  We measure the accuracy of an approximate optimal solution $(X,y_I,Z,y_E,S)$ for \eqref{PDNN-SDP}
  and \eqref{DDNN-SDP} by the relative residual
  \(
   \eta=\max\big\{\eta_{P},\eta_{D},\eta_{\mathcal{S}},\eta_{\mathcal{K}},\eta_{\mathcal{S}^*},\eta_{\mathcal{K}^*},\eta_{C_1},\eta_{C_2},\eta_I,\eta_{I^*}\big\},
  \)
  where $\eta_{P},\eta_{\mathcal{S}},\eta_{\mathcal{K}},\eta_{\mathcal{S}^*},\eta_{\mathcal{K}^*},\eta_{C_1},\eta_{C_2}$
  are defined as before, and $\eta_D,\eta_I,\eta_{I^*}$ are given by
  \begin{align*}
   \eta_D\!=\!\frac{\|\mathcal{A}_I^*y_I\!+Z+\mathcal{A}_E^*y_E+\!S-\!C\|}{1+\|C\|},\
   \eta_{I}\!=\!\frac{\|\max(0,b_I-\mathcal{A}_IX)\|}{1+\|b_I\|},\ \eta_{I^*}\!=\!\frac{\|\max(0,-y_I)\|}{1+\|y_I\|}.
  \end{align*}
  We also compute the relative gap by
  \(
    \eta_g = \frac{\langle C,X\rangle-(\langle b_E,y_E\rangle+\langle b_I,y_I\rangle)}{1+|\langle C,X\rangle + |\langle b_E,y_E\rangle+\langle b_I,y_I\rangle|}.
  \)
  The solvers CADMM, ADMM4d and LADMM4g were terminated whenever $\eta<10^{-6}$ or
  the number of iteration is over the maximum number of iterations $k_{\rm max}=40000$.

  \medskip

  Table \ref{tab3} reports the detailed numerical results for the solvers CADMM, ADMM4d and LADMM4g
  in solving a collection of ${\bf 165}$ extended BIQ problems. Figure \ref{EBIQ-fig} shows the performance 
  profiles of CADMM, ADMM4d and LADMM4g in terms of the number of iterations and the computing time, 
  respectively, for the total ${\bf 165}$ extended BIQ tested problems. It can be seen that CADMM requires 
  the least number of iterations for $80\%$ tested problems although its computing time is comparable with 
  that of ADMM4d, which requires the least computing time for $90\%$ tested problems,
  while ADMM4g requires $1.5$ times as many as iterations as CADMM does for $73\%$ tested problems.

 \begin{figure}[htbp]
   \begin{minipage}{0.5\linewidth}
   \centering
   \includegraphics[width=3.1in]{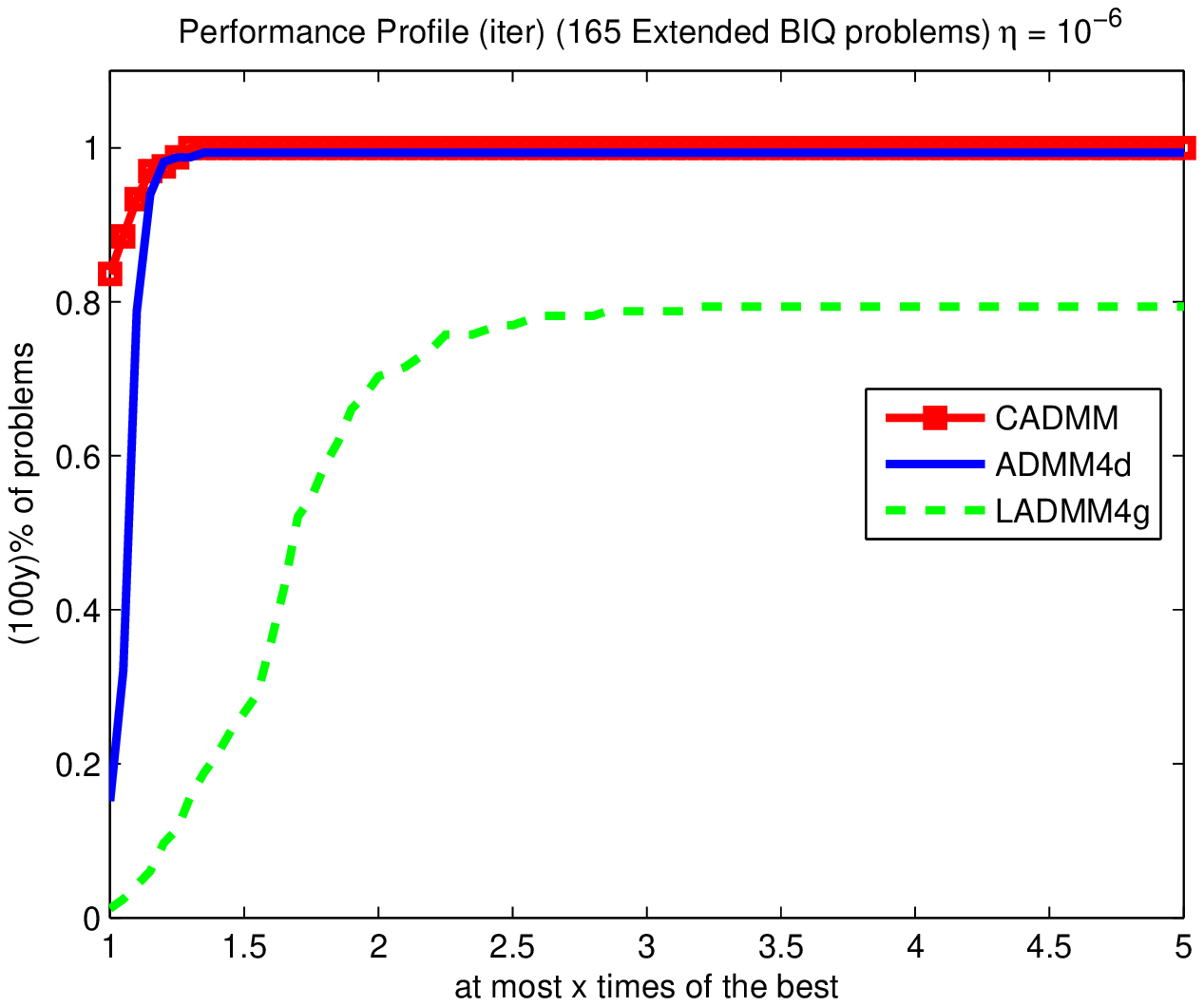}
   \end{minipage}%
   \begin{minipage}{0.5\linewidth}
   \centering
   \includegraphics[width=3.1in]{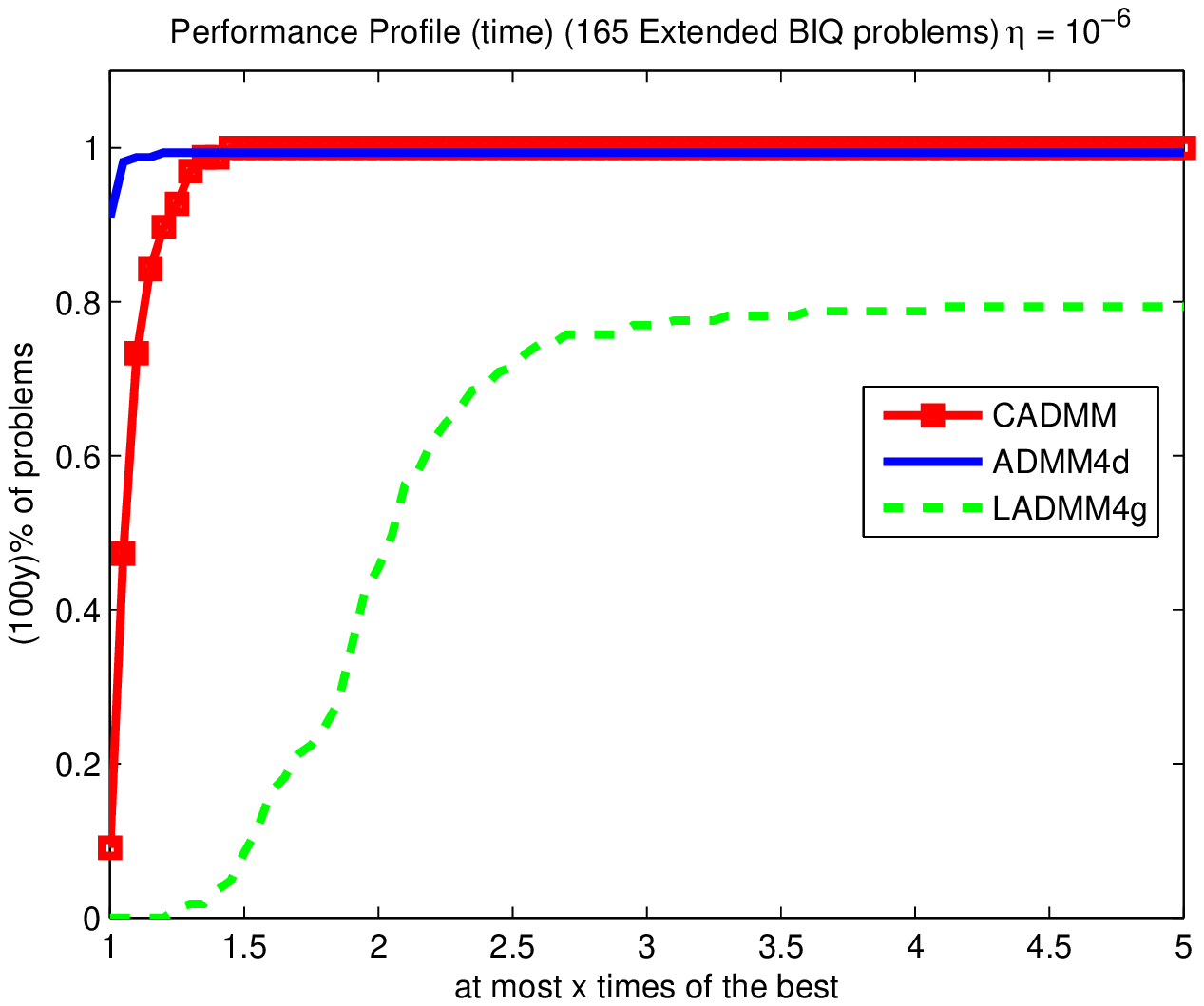}
   \end{minipage}
   \caption{\small Performance profiles of the number of iterations and computing time for EBIQ}
   \label{EBIQ-fig}
  \end{figure}

%------------------------------------------------------------------------------------------------------
 \section{Conclusions}

  We have proposed a corrected semi-proximal ADMM by making suitable correction for the directly
  extended semi-proximal ADMM with a large step-size, which does not only have convergent guarantee
  but also enjoys good numerical performance for the general $p$-block $(p\!\ge 3)$ convex
  optimization problems with linear equality constraints. Extensive numerical tests for 
  the doubly nonnegative SDP problems with many linear equality and/or inequality constraints 
  show that the corrected semi-proximal ADMM is superior to the directly extended ADMM with 
  step-size $\tau=1.618$ in terms of the number of iterations,
  and it requires fewer iterations than the latter for $70\%$ test problems within the comparable
  computing time. In particular, for $40\%$ tested problems, its number of iterations is only
  $67\%$ that of the multi-block ADMM with Gaussian back substitution. Thus, the proposed corrected 
  semi-proximal ADMM to a certain extent resolves the dilemma facing all the existing modified 
  versions of the directly extended ADMM. To the best of our knowledge, this is also the first 
  convergent semi-proximal ADMM for the general multi-block convex optimization problem \eqref{prob}.

  \medskip

  We see from the $\tau$ column of Table \ref{tab2}-\ref{tab3} that for most of test instances, 
  the step-size $\tau_k$ of the prediction step computed by our proposed formula lies in the interval 
  $[1.8,1.95]$, while for some test instances (for example, QAP test problems) it will reduce to 
  be strictly less than $1$ when the relative residual $\eta$ is less than a certain threshold.
  It is interesting that the corrected semi-proximal ADMM still yields the desired result, provided that
  the small step-size appears after the relative residual $\eta$ is less than some threshold.
  This phenomenon seems to match well with the linear convergence rate analysis of the multi-block 
  ADMM in \cite{HLuo13}. In our future research work, we will focus on the convergence rate analysis 
  of the corrected semi-proximal ADMM. Another future research work is to explore the effective 
  convergent algorithms for general $p$-block $(p\!\ge 3)$ separable convex 
  optimization based on the directly extended ADMM with large step-size.

 \bigskip
 \noindent
 {\large\bf Acknowledgements.} The authors would like to thank Professor Defeng Sun
 from National University of Singapore for many helpful discussions on the semi-proximal ADMM,
 and Professor Kim-Chuan Toh from National University of Singapore for providing us the code.
 Our thanks also go to Liuqin Yang for some discussions on the implementation of ADMM3c.

  \newpage

 \tiny

 \setlength\tabcolsep{2pt}

% [inline block 0: 2 envs, 124378 chars -> data_tex | \begin{longtable}{|c|c|c|c|c|c|c|} \caption{\label{tab2} Performance of CADMM, ADMM3d and ADMM3g on $\theta_{+}$, FAP, Q...]



\begin{thebibliography}{1}

  \bibitem{BL06}
  {\sc J. M.\ Borwein and A. S.\ Lewis},
  {\em Convex Analysis and Nonlinear Optimization: Theory and Examples}, Springer, 2006.

  \bibitem{Bur09}
  {\sc S.\ Burer},
  {\em On the copositive representation of binary and continuous nonconvex quadratic programs},
  Mathematical Programming, vol. 120, pp. 479-495, 2009.



% \bibitem{CSY13}
% {\sc C. Chen, Y.\ Shen, and Y. F\ You},
% {\em On the convergence analysis of the alternating direction method of multipliers with three blocks},
% Abstract and Applied Analysis, 2013(2013), Article ID 183961, 7 pages.
  \bibitem{CHYY14}
 {\sc C. H.\ Chen, B. S.\ He, Y. Y.\ Ye and X. M.\ Yuan},
 {\em The direct extension of admm for multi-block convex minimization problems is not necessarily convergent},
  Mathematical Programming, Series A, DOI 10.1007/s10107-014-0826-5, 2014.


 \bibitem {DY2012}
 {\sc W.\ Deng and W. T.\ Yin},
 {\em On the global and linear convergence of the generalized alternating direction method of multipliers},
 Rice University CAAM Technical Report TR12-14, 2012.

 \bibitem {DLPY2014}
 {\sc W.\ Deng, M.-J.\ Lai, Z. M.\ Peng and W. T.\ Yin},
 {\em Parallel multi-block ADMM with $o(1/k)$ convergence}, UCLA CAM 13-64, 2014.

 \bibitem{FPST13}
 {\sc M.\ Fazel, T. K.\ Pong, D. F.\ Sun and P.\ Tseng},
 {\em Hankel matrix rank minimization with applications to system identification and realization},
 SIAM Journal on Matrix Analysis, vol. 34, pp. 946-977, 2013.

  \bibitem{Gross11}
  {\sc D.\ Gross},
  {\em Recovering low-rank matrices from few coefficients in any basis},
  IEEE Transactions on Information Theory, vol. 57, pp. 1548-1566, 2011.

  \bibitem {GM75}
 {\sc R.\ Glowinski and A.\ Marrocco},
 {\em Sur l¡¯ approximation par \'{e}l\'{e}ments finis d'ordre un, etla r\'{e}solution, par p\'{e}nalisation-dualit\'{e},
  d'une classe de probl\`{e}mes de dirichlet non lin\'{e}ares},
  Revue Francaise d' Automatique, Informatique et Recherche Op\'{e}rationelle, vol. 9, pp. 41-76, 1975.

 \bibitem {GM76}
 {\sc D.\ Gabay and B.\ Mercier},
 {\em A dual algorithm for the solution of nonlinear variational problems via finite element approximation},
 Computers and Mathematics with Applications, vol. 2, pp. 17-40, 1976.



  \bibitem {Hahn}
 {\sc P.\ Hahn and M.\ Anjos},
 {\em QAPLIB-A Quadratic Assignment Problem Library}.
 http://www.seas.upenn.edu/qaplib.


  \bibitem {HYZC13}
 {\sc D. R.\ Han, X. M.\ Yuan, W. X.\ Zhang and X. J.\ Cai},
 {\em An ADM-based splitting method fo separable convex programming},
 Computational Optimization and Applications, vol. 54, pp. 343-369, 2013.

%  \bibitem {HanY12}
% {\sc D. R.\ Han and X.\ Yuan},
% {\em A note on the alternating direction method of multipliers},
% Journal of Optimization Theory and Applications, 155(2012), pp. 227-238.

 \bibitem {HTY12}
 {\sc B. S.\ He, M.\ Tao and X. M.\ Yuan},
 {\em Alternating direction method with gaussian back substitution for separable convex programming},
 SIAM Journal on Optimization, vol. 22, pp. 313-340, 2012.

  \bibitem {HTY14}
 {\sc B. S.\ He, M.\ Tao and X. M.\ Yuan},
 {\em A splitting method for separable convex programming},
 IMA Journal of Numerical Analysis, vol. 22, pp. 1-33, 2014.


 \bibitem {HY13}
 {\sc B. S.\ He and X. M.\ Yuan},
 {\em Linearized alternating direction method of multipliers with Gaussian back substitution for separable
 convex programming}, Numerical Algebra Control Optimization, vol. 3, pp. 247-260, 2013.


  \bibitem {HTXY13}
 {\sc B. S.\ He, M.\ Tao, M. H.\ Xu and X. M.\ Yuan},
 {\em An alternating direction-based contraction method for linearly constrained separable convex programming problems},
  Optimization, vol. 62, pp. 573-596, 2013.

  \bibitem {HLuo13}
 {\sc M. Y.\ Hong and Z. Q.\ Luo},
 {\em On the linear convergence of the alternating direction method of multipliers},
 arXiv preprint arXiv:1208.3922v3, 2013.

  \bibitem {LST14}
  {\sc X. D.\ Li, D. F.\ Sun and K. C.\ Toh},
  {\em A Schur complement based semi-proximal ADMM for convex quadratic programming and extensions},
  accepted by Mathematical Programming, Series A, DOI: 10.1007/s10107-014-0850-5, 2014.


 \bibitem{MPS14}
 {\sc W. M.\ Miao, S. H.\ Pan and D. F.\ Sun},
 {\em A rank-corrected procedure for matrix completion with fixed basis coefficients},
 Submitted to Mathematical Programming (under revision), 2014.


  \bibitem{Negahban11}
  {\sc S.\ Negahban and M. J.\ Wainwright},
  {\em Estimation of (near) low-rank matrices with noise and high-dimensional scaling},
  The Annals of Statistics, vol. 39, pp. 1069-1097, 2011.

  \bibitem{Henstenes76}
  {\sc M. R.\ Hestenes},
 {\em Multiplier and gradient methods},
  Journal of Optimization Theory and Applications, vol. 4, pp. 303-320, 1969.

  \bibitem{Powell69}
  {\sc M.\ Powell},
 {\em A method for nonlinear constraints in minimization problems}, in Optimization,
  R. Fletcher, ed., Academic Press, 1969, pp. 283-298.

 \bibitem{PR09}
 {\sc J.\ Povh and F.\ Rendl},
 {\em Copositive and semidefinite relaxations of the quadratic assignment problem},
 Discrete Optimization, vol. 6, pp. 231-241, 2009.

  \bibitem{PW07}
 {\sc J. M.\ Peng and Y.\ Wei},
 {\em Approximating $k$-means-type clustering via semidefinite programming},
 SIAM Journal on Optimization, 18(2007), pp. 186-205.


  \bibitem{Roc70}
  {\sc R. T.\ Rockafellar},
  {\em Convex Analysis}, 
  Princeton University Press, Princeton, NJ, 1970.

  \bibitem{RW98}
  {\sc R. T.\ Rockafellar and R. J-B.\ Wets},
  {\em Variational Analysis}, Springer, 1998.


 \bibitem{Roc76}
  {\sc R. T.\ Rockafellar},
 {\em Augmented Lagrangians and applications of the proximal point algorithm in convex programming},
 Mathematics of Operations, vol. 1, pp. 97-116, 1976.

  \bibitem {STY14}
 {\sc D. F.\ Sun, K. C.\ Toh and L. Q.\ Yang},
 {\em A convergent proximal alternating direction method of multipliers for conic programming with $4$-block constraints},
  arXiv preprint arXiv:1404.5378, 2014.


 \bibitem {Sloane05}
 {\sc N.\ Sloane},
 {\em Challenge Problems: Independent Sets in Graphs},
 http://www.research. att.com/njas/doc/graphs.html, 2005.

  \bibitem {SSZ08}
 {\sc D. F.\ Sun, J.\ Sun and L. W.\ Zhang},
 {\em  The rate of convergence of the augmented Lagrangian method for nonlinear semidefinite programming},
  Mathematical Programming, vol. 114, pp. 349-391, 2008.


 \bibitem {TYuan11}
 {\sc M.\ Tao and X. M.\ Yuan},
 {\em  Recovering low-rank and sparse components of matrices from incomplete and noisy observations},
  SIAM Journal on Optimization, vol. 21, pp. 57-81, 2011.

 \bibitem{TCCJ92}
 {\sc M.\ Trick, V.\ Chavatal, B.\ Cook, D.\ Johnson, C.\ Mcgeoch and R.\ Tajan},
 {\em The second dimacs implementation challenge-NP hard problems: Maximum clique, graph coloring and satisfiability}.
 http://dimacs.rutgers.edu/Challenges, 1992.

  \bibitem {Toh04}
 {\sc K. C.\ Toh},
 {\em  Solving large scale semidefinite programs via an iterative solver on the augmented systems},
  SIAM Journal on Optimization, vol. 14, pp. 670-698, 2004.


   \bibitem {WGY10}
 {\sc Z. W.\ Wen, D.\ Goldfarb and W. T.\ Yin},
 {\em Alternating direction augmented Lagrangian methods for semidefinite programming},
  Mathematical Programming Computation, vol. 12, , pp. 203-230, 2012.

 \bibitem{WGRPM09}
 {\sc J.\ Wright, A.\ Ganesh, S.\ Rao, Y.\ Peng and Y.\ Ma},
 {\em Robust principle compoenent analysis: Exact recovery of corrupted low-rank matrices by convex optimization},
 in Proceeding of Neural Information Processing Systems, 3(2009).


 \bibitem{WHML13}
 {\sc X. F.\ Wang, M. Y.\ Hong, S. Q.\ Ma and Z. Q.\ Luo},
 {\em Solving multiple-block seprable convex minimization problems using two-block alternating direction
 method of multipliers}, arXiv preprint arXiv: 1308.5294, 2013.



  \bibitem {WXL13}
 {\sc Y.\ Wang, H.\ Xu and C.\ Leng},
 {\em  Provable Subspace Clustering: When LRR meets SSC},
  in NIPS 2013, Lake Tahoe, 2013.

   \bibitem {XW11}
 {\sc M. H.\ Xu and T.\ Wu},
 {\em  A class of linearized proximal alternating direction methods},
  Journal of Optimization Theory and Applications, vol. 151, pp. 321-327, 2011.


 \bibitem {ZJD13}
 {\sc Y. M.\ Zhang, Z. L.\ Jiang and L. S.\ Davis},
 {\em  Learning Structured Low-rank Representations for Image Classification},
  IEEE Conference on Computer Vision and Pattern Recognition, 2013.
 \end{thebibliography}
\end{document}